\theoremstyle{plain}
\newtheorem{thm}{Theorem}[section]
\newtheorem{lemma}[thm]{Lemma}
\newtheorem{prop}[thm]{Proposition}
\theoremstyle{definition}
\newtheorem{defn}[thm]{Definition}
\newtheorem{remark}[thm]{Remark}
\newtheorem{ex}[thm]{Example}
\begin{document}

\title{Exponentiation of Commuting nilpotent varieties}

\author[Paul Sobaje]
{Paul Sobaje}

\begin{abstract}

\sloppy{
Let $H$ be a linear algebraic group over an algebraically closed field of characteristic $p>0$.  We prove that any ``exponential map" for $H$ induces a bijection between the variety of $r$-tuples of commuting $[p]$-nilpotent elements in $Lie(H)$ and the variety of height $r$ infinitesimal one-parameter subgroups of $H$.  In particular, we show that for a connected reductive group $G$ in pretty good characteristic, there is a canonical exponential map for $G$ and hence a canonical bijection between the aforementioned varieties, answering in this case questions raised both implicitly and explicitly by Suslin, Friedlander, and Bendel.}

\end{abstract}

\maketitle

Let $H$ be a linear algebraic group over an algebraically closed field $k$ of characteristic $p>0$.  Let $H_{(r)}$ denote the $r$-th Frobenius kernel of $H$, and $\mathfrak{h}$ its Lie algebra.  There is a $[p]$-mapping on $\mathfrak{h}$ which sends $x \mapsto x^{[p]}$, and we set $\mathcal{N}_1(\mathfrak{h})$ to be the restricted nullcone of $\mathfrak{h}$, which consists of those $x$ for which $x^{[p]}=0$.  Let $\mathcal{C}_r(\mathcal{N}_1(\mathfrak{h}))$ denote the variety of $r$-tuples of commuting elements in $\mathcal{N}_1(\mathfrak{h})$, while $\text{Hom}_{gs/k}(\mathbb{G}_{a(r)},H)$ is the affine variety of height $r$ infinitesimal one-parameter subgroups of $H$ (it is the set of $k$-points of the affine scheme in \cite[Theorem 1.5]{SFB1}).  It was shown by Suslin, Friedlander, and Bendel in \cite{SFB1} and \cite{SFB2} that this last variety is homeomorphic to the cohomological variety of $H_{(r)}$, thereby establishing its importance within representation theory, specifically within the study of support varieties for modules.

Our aim in this paper is to provide a better understanding of the relationship between $\mathcal{C}_r(\mathcal{N}_1(\mathfrak{h}))$ and $\text{Hom}_{gs/k}(\mathbb{G}_{a(r)},H)$.  We give a definition of what we call an \textit{exponential map} for $H$, and show that any such map, if it exists, induces a bijection between these varieties.  In particular, we show that if $G$ is a connected reductive group over $k$ in \textit{pretty good} characteristic (see Section 1), then there is a canonical exponential map for $G$, and hence a canonical bijection between $\mathcal{C}_r(\mathcal{N}_1(\mathfrak{g}))$ and $\text{Hom}_{gs/k}(\mathbb{G}_{a(r)},G)$, extending results along these lines found in \cite{SFB1}, \cite{M}, and \cite{So2}.  We also give an example of a linear algebraic group $H$ and an $r > 1$ for which $\mathcal{C}_r(\mathcal{N}_1(\mathfrak{h}))$ and $\text{Hom}_{grp/k}(\mathbb{G}_{a(r)},H)$ are varieties which have different dimensions.

The importance of obtaining an explicit description of $\text{Hom}_{grp/k}(\mathbb{G}_{a(r)},H)$ is (from our point of view) primarily because it is a necessary step in extending support variety computations such as those in \cite{NPV}, \cite{DNP}, and \cite{So1}.  However, the results in this paper also have a new application to recent work by Friedlander \cite{F}, in which the author defines support varieties for rational $H$-modules (that is, support varieties directly defined for $H$ rather than for the Frobenius kernels of $H$).

This paper is organized as follows.  After recalling relevant concepts in Section 1, we give in Section 2 our definition of an exponential map on $\mathcal{N}_1(\mathfrak{h})$ and prove that it induces the variety bijection mentioned above.  Section 3 contains the strongest results which are available only in the case of connected reductive groups.  Our final section uses the work in Section 3 to give a simplified proof that one can achieve \textit{saturation} for a connected reductive group in good characteristic.  This notion is due to Serre, and involves assigning to each $p$-unipotent element $g \in G$ a one-parameter subgroup of $G$ whose image contains $g$, and which can be specified in some canonical way.  Seitz explored saturation in \cite{Sei} and proved that it can be achieved for any such $G$.

\section{Preliminaries and Notation}

We fix $k$ to be an algebraically closed field of characteristic $p>0$.  If $H$ is an affine algebraic group over $k$, then it is also an affine group scheme over $k$, and by abuse of notation we will use $H$ to denote both the scheme and the group of $k$-points of the scheme (as will be clear by the context).  Let $\mathcal{U}_1(H)$ denote the closed subset of $p$-unipotent elements in $H$ (i.e. the unipotent variety of $H$).  For $g,h \in H$, $X \in \mathfrak{h}$, we write $g \cdot h = ghg^{-1}$, while $g \cdot X$ denotes the adjoint action.  The centralizer of $g$ is $C_H(g)$, $C_H(X)$ is the stablizer of $X$ in $H$, $C_{\mathfrak{h}}(X)$ is the centralizer of $X$ in $\mathfrak{h}$, and $C_{\mathfrak{h}}(g)$ are those elements in $\mathfrak{h}$ which are stabilized by $g$.

For any affine group scheme $H$ (i.e. not necessarily coming from an algebraic group), we write $k[H]$ for its coordinate ring.  We denote by $Dist(H)$ its algebra of distributions (see \cite[I.7]{J2}), and by $\mathfrak{h}$ its Lie algebra.  Let $H_{(r)}$ denote the $r$-th Frobenius kernel of $H$ (see \cite[I.9]{J2} regarding how to define such a kernel without specifying an $\mathbb{F}_p$-structure on $H$).  We then have that $Dist(H_{(r)}) \subseteq Dist(H_{(r+1)})$, and $Dist(H) = \bigcup_{r \ge 1} Dist(H_{(r)})$.  We also recall that $Dist(H_{(1)})$ is isomorphic as a Hopf algebra to the restricted enveloping algebra of $\mathfrak{h}$ \cite[I.9.6(4)]{J2}.

If $\varphi$ is a homomorphism of affine group schemes from $H_1$ to $H_2$, then it induces a homomorphism of Hopf algebras from $Dist(H_1)$ to $Dist(H_2)$, and we will denote this map by $d\varphi$.  By abuse of notation we will also use $d\varphi$ in the more standard way to denote the differential of $\varphi$, that is the induced map from $\mathfrak{h}_1$ to $\mathfrak{h}_2$.  In fact, this map on Lie algebras can really be viewed as a restriction of the map on distribution algebras by the comments above.

The additive group $\mathbb{G}_a$ has coordinate algebra $k[\mathbb{G}_a] \cong k[t]$, and $Dist(\mathbb{G}_a)$ is spanned by the elements $\frac{d}{dt}^{(j)}$, where

$$\frac{d}{dt}^{(j)}(t^i) = \delta_{ij}.$$

This notation is not standard.  For example, Jantzen denotes the element $\frac{d}{dt}^{(j)}$ as $\gamma_j$ in \cite[I.7.8]{J2}.  If we set $u_j = \frac{d}{dt}^{(p^j)}$, and if $m$ is an integer with $p$-adic expansion $$m = m_0 + m_1p + \cdots + m_qp^q, \quad 0 \le m_i < p,$$ then

$$\frac{d}{dt}^{(m)} = \frac{u_0^{m_0}\cdots u_q^{m_q}}{m_0!\cdots m_q!}$$

Therefore $Dist(\mathbb{G}_a)$ is generated as an algebra over $k$ by the set $\{u_j\}_{j \ge 0}$, while $Dist(\mathbb{G}_{a(r)})$ is generated by the subset where $j<r$.  Also, for any affine group scheme $H$, a homomorphism from $\mathbb{G}_{a(r)}$ to $H$ is equivalent to a Hopf algebra homomorphism from $Dist(\mathbb{G}_{a(r)})$ to $Dist(H)$, this latter homomorphism being determined by the images of the elements $u_j$.

Throughout this paper $G$ will always denote a connected reductive algebraic group over $k$.  We fix a maximal torus $T$ of $G$, and denote by $\Phi$ the root system of $G$ with respect to $T$.  We choose a set of simple roots $\Pi \subseteq \Phi$ which determines the set of positive roots $\Phi^+$.  Let $B$ denote the Borel subgroup of $G$ containing the root subgroups corresponding to every positive root.  For any $J \subseteq \Pi$, let $P_J$ be the corresponding parabolic subgroup of $G$.  We define $\Phi_J = \Phi \cap \mathbb{Z}J$, and $\Phi_J^+ = \Phi^+ \cap \Phi_J$.

The prime $p$ is said to be good for $G$ if it is good for $\Phi$.  Specifically, this means that $p > 2$ if $\Phi$ has a component of type $B,C,$ or $D$; $p > 3$ if $\Phi$ has a component of type $E_6, E_7, F_4$ or $G_2$; and $p > 5$ if $E_8$ is a component of $\Phi$.  Equivalently, $p$ is good for $\Phi$ if and only if the $p$-torsion of $\mathbb{Z}\Phi/\mathbb{Z}\Phi^{\prime}$ vanishes for every subset $\Phi^{\prime} \subseteq \Phi$ \cite[Lemma 2.10]{H}.

We will need the following related notion which is due to Herpel.  Let $X$ be the character group of $T$, $Y$ the cocharacter group, and $\Phi^{\vee}$ the set of coroots, so that the quadruple $(X,\Phi,Y,\Phi^{\vee})$ is the root datum of $G$ with respect to $T$.  Then $p$ is said to be pretty good for $G$ if the groups $X/\mathbb{Z}\Phi^{\prime}$ and $Y/\mathbb{Z}{\Phi^{\prime}}^{\vee}$ have no $p$-torsion for all subsets $\Phi^{\prime} \subseteq \Phi$ \cite[Definition 2.11]{H}.

For each $\alpha \in \Phi$, fix a root homomorphism $\varphi_{\alpha}: \mathbb{G}_a \rightarrow G$.  We then set $e_{\alpha} = d\varphi_{\alpha}(\frac{d}{dt}^{(1)}) \in \mathfrak{g}$.

\section{Exponential Maps and Infinitesimal One-parameter Subgroups}

Let $H$ be linear algebraic group over $k$.  It follows from \cite[I.8.4]{J2} that a homomorphism of affine group schemes from $\mathbb{G}_{a(r)}$ to $H_{(r)}$ is equivalent to a homomorphism of Hopf algebras from $Dist(\mathbb{G}_{a(r)})$ to $Dist(H_{(r)}) \subseteq Dist(H)$.  Since $Dist(\mathbb{G}_{a(r)})$ is finite dimensional over $k$, it also follows that any Hopf algebra homormorphism from $Dist(\mathbb{G}_{a(r)})$ to $Dist(H)$ factors through $Dist(H_{(s)})$ for some $s$, hence comes from a homomorphism from $\mathbb{G}_{a(r)}$ to $H_{(s)} \subseteq H$.  But any homomorphism $\varphi$ from $\mathbb{G}_{a(r)}$ to $H$ must factor through $H_{(r)}$.  Indeed, if $\varphi^*$ is the induced map from $k[H]$ to $k[\mathbb{G}_{a(r)}]$ and $x$ is in the augmentation ideal of $k[H]$, then $\varphi^*(x^{p^r}) = \varphi^*(x)^{p^r} = 0$, hence $\varphi$ factors through the natural projection $k[H] \rightarrow k[H_{(r)}]$.  In conclusion we see that Hopf algebra homomorphisms from $Dist(\mathbb{G}_{a(r)})$ to $Dist(H)$ are equivalent to group scheme homomorphisms from $\mathbb{G}_{a(r)}$ to $H$.

For each $X \in \mathcal{N}_1(\mathfrak{h})$ there is a homomorphism of Hopf algebras from $Dist(\mathbb{G}_{a(1)})$ to $Dist(H)$ which sends $u_0$ to $X$.  By the preceding discussion there is then an identification between $\mathcal{N}_1(\mathfrak{h})$ and $\text{Hom}_{grp/k}(\mathbb{G}_{a(1)},H)$ for all $H$.  When $r > 1$ however, it is not immediately clear how a homomorphism from $Dist(\mathbb{G}_{a(r)})$ to $Dist(H)$ determines (and is determined by) an element in $\mathcal{C}_r(\mathcal{N}_1(\mathfrak{h}))$.  In fact, as we show at the end of the section, there are algebraic groups for which $\text{Hom}_{grp/k}(\mathbb{G}_{a(r)},H) \ncong C_r(\mathcal{N}_1(\mathfrak{h}))$.  We will therefore need to restrict our attention to those $H$ for which an \textit{exponential map} exists (defined below).

We start with an important lemma which is in fact valid for more general affine group schemes over $k$ (that is, not only those for which $k[H]$ is reduced).

\begin{lemma}\label{equal}
Let $H$ be an affine group scheme such that $k[H]$ is a finitely generated $k$-algebra, and let $\varphi_1, \varphi_2 \in \textup{Hom}_{grp/k}(\mathbb{G}_{a(r)}, H)$. Let $0<m<r$, and suppose that $d\varphi_1(u_i) = d\varphi_2(u_i)$, for all $i < m$. Then $d\varphi_1(u_m) - d\varphi_2(u_m) \in \mathfrak{h}$.
\end{lemma}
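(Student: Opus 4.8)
The plan is to prove the stronger statement that $x:=d\varphi_1(u_m)-d\varphi_2(u_m)$ is a \emph{primitive} element of $Dist(H)$ (here I write $d\varphi_j$ for the induced Hopf algebra homomorphism $Dist(\mathbb{G}_{a(r)})\to Dist(H)$), since $\mathfrak{h}$ is exactly the set of primitive elements of the augmentation ideal of $Dist(H)$ (see \cite[I.7]{J2}). Because $d\varphi_1$ and $d\varphi_2$ are Hopf algebra homomorphisms they intertwine the comultiplications, so the whole question reduces to understanding the coproduct of $u_m=\frac{d}{dt}^{(p^m)}$ in $Dist(\mathbb{G}_{a(r)})$ together with the formula recorded in Section 1 that expresses every divided power $\frac{d}{dt}^{(a)}$ with $a<p^m$ as a polynomial in $u_0,\dots,u_{m-1}$.

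First I would set $\gamma_n:=\frac{d}{dt}^{(n)}$ and recall the standard coproduct formula $\Delta(\gamma_n)=\sum_{a+b=n}\gamma_a\otimes\gamma_b$ in $Dist(\mathbb{G}_a)$ (\cite[I.7.8]{J2}), which restricts to $Dist(\mathbb{G}_{a(r)})$ since $m<r$ forces $\gamma_a\in Dist(\mathbb{G}_{a(r)})$ for all $a\le p^m$. Isolating the two extreme summands gives
$$\Delta(u_m)=u_m\otimes 1+1\otimes u_m+\sum_{a=1}^{p^m-1}\gamma_a\otimes\gamma_{p^m-a}.$$
In every middle term both indices $a$ and $p^m-a$ lie strictly between $0$ and $p^m$, so by the formula $\gamma_a=\dfrac{u_0^{a_0}\cdots u_{m-1}^{a_{m-1}}}{a_0!\cdots a_{m-1}!}$ (where $a=a_0+a_1p+\cdots+a_{m-1}p^{m-1}$) each $\gamma_a$ and each $\gamma_{p^m-a}$ occurring is a polynomial over $k$ in $u_0,\dots,u_{m-1}$ with zero constant term. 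Since $d\varphi_1$ and $d\varphi_2$ are $k$-algebra homomorphisms agreeing on $u_0,\dots,u_{m-1}$ by hypothesis, they agree on each such $\gamma_a$ and $\gamma_{p^m-a}$.

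Next I apply the algebra homomorphism $d\varphi_j\otimes d\varphi_j$ to the displayed coproduct and use that $d\varphi_j$ intertwines comultiplications, obtaining for $j=1,2$
$$\Delta\bigl(d\varphi_j(u_m)\bigr)=d\varphi_j(u_m)\otimes 1+1\otimes d\varphi_j(u_m)+\sum_{a=1}^{p^m-1}d\varphi_j(\gamma_a)\otimes d\varphi_j(\gamma_{p^m-a}).$$
Subtracting the $j=2$ identity from the $j=1$ identity, the sums cancel because $d\varphi_1(\gamma_a)=d\varphi_2(\gamma_a)$ for $0<a<p^m$, leaving $\Delta(x)=x\otimes 1+1\otimes x$; applying $\epsilon\otimes\mathrm{id}$ also shows $\epsilon(x)=0$ (indeed primitivity forces this). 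Hence $x\in\mathfrak{h}$, as desired.

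The argument is short, so the only point demanding care is the combinatorial bookkeeping: one must check that apart from the pairs $(a,b)=(p^m,0)$ and $(0,p^m)$, every summand of $\sum_{a+b=p^m}$ has both entries in the range $0<\cdot<p^m$ where the divided-power-to-$u_i$ formula applies, so that the hypothesis "$d\varphi_1(u_i)=d\varphi_2(u_i)$ for $i<m$" can actually be brought to bear to force the cancellation. I would also make explicit at the outset which identification of $\mathfrak{h}$ inside $Dist(H)$ is being used, and note that the finite generation of $k[H]$ plays no role in this particular step (consistent with the remark that the lemma holds for general affine group schemes).
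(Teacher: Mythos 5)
Your proposal is correct and follows essentially the same route as the paper: both arguments show that $d\varphi_1(u_m)-d\varphi_2(u_m)$ is primitive by expanding $\Delta(u_m)$, observing that the middle terms of the coproduct lie in $Dist(\mathbb{G}_{a(m)})$ where $d\varphi_1$ and $d\varphi_2$ agree, and then invoking the identification of primitive elements of $Dist(H)$ with $\mathfrak{h}$. Your version merely makes the combinatorial bookkeeping for the middle terms explicit, which the paper leaves implicit.
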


\begin{proof}
Let $\Delta_H$ and $\Delta_{\mathbb{G}_a}$ denote the comultiplication maps on $Dist(H)$ and $Dist(\mathbb{G}_a)$ respectively.  We have then that

\vspace{0.06in}
\begin{center} $\Delta_H(d\varphi_1(u_m) - d\varphi_2(u_m)) = d\varphi_1 \otimes d\varphi_1(\Delta_{\mathbb{G}_a}(u_m)) - d\varphi_2 \otimes d\varphi_2(\Delta_{\mathbb{G}_a}(u_m)),$ \end{center} 
\vspace{0.06in}

\noindent and

$$\Delta_{\mathbb{G}_a}(u_m) = u_m \otimes 1 + 1 \otimes u_m + \sum y_1 \otimes y_2,$$

\vspace{0.2in}

\noindent with each $y_i$ contained in $Dist(\mathbb{G}_{a(m)})$. By assumption, $d\varphi_1(u_i) = d\varphi_2(u_i)$ for all $i < m$, hence $d\varphi_1$ and $d\varphi_2$ agree on $Dist(\mathbb{G}_{a(m)})$. From this it follows that

\vspace{0.06in}
\begin{center} $\Delta_H(d\varphi_1(u_m) - d\varphi_2(u_m)) = \left( d\varphi_1(u_m) - d\varphi_2(u_m)\right) \otimes 1 + 1 \otimes \left( d\varphi_1(u_m) - d\varphi_2(u_m) \right).$ \end{center} 
\vspace{0.06in}

Thus, the comultiplication of $d\varphi_1(u_m) - d\varphi_2(u_m)$ is primitive. By \cite[\S 3.18]{T}, the primitive elements in $Dist(H)$ all lie in $\mathfrak{h}$, proving the claim.
\end{proof}

\begin{ex}
Consider $SL_2$ in its natural representation on $k^2$ and let $X$ be a fixed nilpotent element in $\text{End}_k(k^2)$.  Define one-parameter subgroups $\varphi_1, \varphi_2, \varphi_3$ of $SL_2$ by $$\varphi_1(s) = 1 + sX,\; \varphi_2(s) = 1+s^pX, \; \varphi_3(s) = \varphi_1(s)\varphi_2(s).$$

For each $i \ge 1$, let $X^{(i)} = d\varphi_1(\frac{d}{dt}^{(i)})$.  Note that $X^{(1)} = X$.  Comparing differentials, we see that, by definition, $d\varphi_1(u_1) = X^{(p)}$, while $d\varphi_2(u_1) = X^{(1)}$ and $\varphi_3(u_1) = X^{(p)} + X^{(1)}$.  We further have that $d\varphi_1(u_0) = d\varphi_3(u_0)$, and that $d\varphi_3(u_1) - d\varphi_1(u_1) = X^{(1)} \in \mathfrak{sl}_2$.
\end{ex}

\begin{defn}\label{map}
Let $H$ be an affine algebraic group over $k$.  We say that $H$ has an \textit{exponential map} if there exists an $H$-equivariant injective map

\vspace{0.06in}
\begin{center} $\mathcal{E}: \mathcal{N}_1(\mathfrak{h}) \rightarrow H$ \end{center}
\vspace{0.06in}

\noindent such that for each $X \in \mathcal{N}_1(\mathfrak{h})$:

\begin{enumerate}

\item The map $\mathcal{E}_X$, which sends $s \in \mathbb{G}_a$ to $\mathcal{E}(sX)$, is a one-parameter additive subgroup of $H$ whose differential maps $u_0 \in Dist(\mathbb{G}_a)$ to $X$.

\item The one-parameter subgroup $\mathcal{E}_X(\mathbb{G}_a)$ acts trivially in the adjoint action on $C_{\mathfrak{h}}(X) \subseteq \mathfrak{h}$.

\end{enumerate}

\end{defn}

For example, the truncated exponential series defines such a map for $GL_n$, and also for any subgroup of $GL_n$ which is of exponential type \cite[Lemma 1.7]{SFB1}.

We observe that the conditions listed above imply the following important property of an exponential map:

\begin{lemma}
If $\mathcal{E}$ is an exponential map for $H$, and if $X,Y \in \mathcal{N}_1(\mathfrak{h})$ are such that $[X,Y] = 0$, then $\mathcal{E}(X)$ and $\mathcal{E}(Y)$ commute in $H$.
\end{lemma}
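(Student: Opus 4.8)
The plan is to deduce the claim directly from the two defining properties of an exponential map together with its $H$-equivariance, with essentially no computation.

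First I would observe that the hypothesis $[X,Y]=0$ says precisely that $Y \in C_{\mathfrak{h}}(X)$. Since $\mathcal{E}(X) = \mathcal{E}(1\cdot X) = \mathcal{E}_X(1)$ is a member of the one-parameter subgroup $\mathcal{E}_X(\mathbb{G}_a)$, condition (2) of Definition \ref{map} guarantees that $\mathcal{E}(X)$ acts trivially in the adjoint action on $C_{\mathfrak{h}}(X)$; in particular $\mathcal{E}(X)\cdot Y = Y$.

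Next I would invoke the $H$-equivariance of $\mathcal{E}$, applied to the element $g = \mathcal{E}(X) \in H$: this gives $\mathcal{E}(X)\,\mathcal{E}(Y)\,\mathcal{E}(X)^{-1} = \mathcal{E}\bigl(\mathcal{E}(X)\cdot Y\bigr)$, where $g \cdot X = gXg^{-1}$ and $g\cdot X$ on the inside denotes the adjoint action, in keeping with the conventions fixed in Section 1. Combining this with the previous step, the right-hand side equals $\mathcal{E}(Y)$, so $\mathcal{E}(X)$ and $\mathcal{E}(Y)$ commute in $H$.

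The argument is short enough that there is no genuine obstacle; the only points requiring a moment's care are (a) recognizing $\mathcal{E}(X)$ itself as an element of $\mathcal{E}_X(\mathbb{G}_a)$, so that condition (2) may be applied to it rather than only to the ``interior'' elements $\mathcal{E}_X(s)$, and (b) keeping straight that $H$-equivariance of $\mathcal{E}$ is conjugation-equivariance on the target and adjoint-equivariance on the source. Note that neither the injectivity of $\mathcal{E}$ nor condition (1) is needed for this particular lemma.
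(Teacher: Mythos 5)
Your proof is correct and is essentially identical to the paper's: both reduce $[X,Y]=0$ to $Y \in C_{\mathfrak{h}}(X)$, apply condition (2) to conclude $\mathcal{E}(X)$ fixes $Y$ under the adjoint action, and then use $H$-equivariance to transfer this to commutation with $\mathcal{E}(Y)$. No issues.
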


\begin{proof}
If $[X,Y] = 0$, then $Y \in C_{\mathfrak{h}}(X)$.  By assumption $\mathcal{E}(X)$ acts trivially on $C_{\mathfrak{h}}(X)$, hence $\mathcal{E}(X) \in C_H(Y)$.  By the $H$-equivariance of $\mathcal{E}$ it follows that $\mathcal{E}(X) \in C_H(\mathcal{E}(Y))$.
\end{proof}

Suppose now that $H$ has an exponential map $\mathcal{E}$.  Let $F$ denote the standard Frobenius morphism on $\mathbb{G}_a$, defined by $F(s) = s^p$.  For any $X \in \mathcal{N}_1(\mathfrak{h})$ and any $i \ge 0$, we denote by $\mathcal{E}_{X}^{(i)}$ the one parameter subgroup of $H$ given by $\mathcal{E}_{X} \circ F^i$.  For any $r \ge 1$, and any $r$-tuple $(X_0, X_1, \cdots, X_{r-1})$ of pairwise commuting elements in $\mathcal{N}_1(\mathfrak{h})$ we then obtain a one-parameter subgroup of $H$

\vspace{0.06in}
\begin{center} $\mathcal{E}_{X_0}\mathcal{E}_{X_1}^{(1)}\cdots \mathcal{E}_{X_{r-1}}^{(r-1)}(s) = \mathcal{E}(sX_0)\mathcal{E}(s^pX_1)\cdots \mathcal{E}(s^{p^{r-1}}X_{r-1})$. \end{center} 
\vspace{0.06in}

This one-parameter subgroup can be restricted to $\mathbb{G}_{a(r)}$, defining a map $$\mathcal{E}_*: \mathcal{C}_r(\mathcal{N}_1(\mathfrak{h})) \rightarrow \text{Hom}_{grp/k}(\mathbb{G}_{a(r)},H).$$  Our next result is the main theorem in this section and shows that $\mathcal{E}_*$ is a bijection.

\bigskip
\begin{thm}\label{description}

Let $\mathcal{E}$ be an exponential map for $H$.  Then the map $$\mathcal{E}_*: \mathcal{C}_r(\mathcal{N}_1(\mathfrak{h})) \rightarrow \text{Hom}_{grp/k}(\mathbb{G}_{a(r)},H),$$ defined by $$\mathcal{E}_*(X_0,\ldots,X_{r-1}) = \mathcal{E}_{X_0}\mathcal{E}_{X_1}^{(1)}\cdots \mathcal{E}_{X_{r-1}}^{(r-1)}\mid_{\mathbb{G}_{a(r)}},$$ is a bijection.

\end{thm}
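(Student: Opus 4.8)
The plan is to prove injectivity and surjectivity of $\mathcal{E}_*$ separately. The geometric fact needed in both parts is a ``first-order'' rigidity of $\mathcal{E}$: for every $X \in \mathcal{N}_1(\mathfrak{h})$,
$$ C_H\bigl(\mathcal{E}_X(\mathbb{G}_{a(1)})\bigr) \;=\; C_H(X) \;=\; C_H\bigl(\mathcal{E}_X(\mathbb{G}_a)\bigr), $$
where $C_H(X)$ is the stabilizer of $X$ under the adjoint action. The first equality holds because centralizing the infinitesimal subgroup $\mathcal{E}_X(\mathbb{G}_{a(1)})$ is the same as having adjoint action fix $X = d\mathcal{E}_X(u_0)$, which generates $Dist(\mathcal{E}_X(\mathbb{G}_{a(1)}))$; the second uses the $H$-equivariance of $\mathcal{E}$ decisively, since $\mathrm{Ad}(h)X = X$ forces $h\,\mathcal{E}(sX)\,h^{-1} = \mathcal{E}\bigl(s\,\mathrm{Ad}(h)X\bigr) = \mathcal{E}(sX)$ for every scalar $s$. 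I take for granted that $\mathcal{E}_*$ is well defined, as explained before the statement: commuting $X_i$ yield pairwise commuting subgroups $\mathcal{E}_{X_i}(\mathbb{G}_a)$, so the displayed product is a homomorphism from $\mathbb{G}_a$, which one then restricts to $\mathbb{G}_{a(r)}$.

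For injectivity, I would compute the differential of $\varphi := \mathcal{E}_*(X_0,\dots,X_{r-1})$ on the elements $u_m$, $m < r$. Writing the product one-parameter subgroup as the composite $\mathbb{G}_a \to \mathbb{G}_a^r \to H$, where the first map is the diagonal and the second sends $(s_0,\dots,s_{r-1})$ to $\prod_i \mathcal{E}\bigl(s_i^{p^i} X_i\bigr)$, and using $dF^i(u_j) = u_{j-i}$ for $j \ge i$ while $dF^i(u_j)=0$ for $j<i$, together with the divided-power formula $\Delta^{(r-1)}(\gamma_{p^m}) = \sum_{a_0+\dots+a_{r-1}=p^m}\gamma_{a_0}\otimes\dots\otimes\gamma_{a_{r-1}}$ for the iterated comultiplication, one arrives at
$$ d\varphi(u_m) \;=\; X_m + P_m(X_0,\dots,X_{m-1}), $$
in which the term $X_m$ comes exactly from the summand $a_m = p^m$, $a_i = 0\ (i\neq m)$, and $P_m$ is a fixed expression involving only $X_0,\dots,X_{m-1}$, built from the operators $\gamma_b \mapsto d\mathcal{E}_{X_i}(\gamma_b)$ and from the (commutative) multiplication and addition in $Dist(H)$ --- commutative because $\mathcal{E}_{X_i}(\mathbb{G}_a)$, hence $d\mathcal{E}_{X_i}\bigl(Dist(\mathbb{G}_a)\bigr)$, pairwise commute. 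Since $d\varphi(u_0) = X_0$, induction on $m$ recovers the tuple $(X_0,\dots,X_{r-1})$ from $\varphi$, giving injectivity.

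For surjectivity, I would induct on $r$, the case $r=1$ being the identification $\mathrm{Hom}_{grp/k}(\mathbb{G}_{a(1)},H) = \mathcal{N}_1(\mathfrak{h})$. Given $\varphi\colon \mathbb{G}_{a(r)}\to H$, set $X_0 := d\varphi(u_0)$; it lies in $\mathcal{N}_1(\mathfrak{h})$ because $u_0 = \gamma_1$ is primitive with $u_0^p = 0$ and $d\varphi$ is an algebra map. Then $\varphi|_{\mathbb{G}_{a(1)}} = \mathcal{E}_{X_0}|_{\mathbb{G}_{a(1)}}$, so as $\varphi(\mathbb{G}_{a(r)})$ is abelian and contains $\mathcal{E}_{X_0}(\mathbb{G}_{a(1)})$, the rigidity statement gives $\varphi(\mathbb{G}_{a(r)}) \subseteq C_H\bigl(\mathcal{E}_{X_0}(\mathbb{G}_{a(1)})\bigr) = C_H\bigl(\mathcal{E}_{X_0}(\mathbb{G}_a)\bigr)$; that is, $\mathcal{E}_{X_0}(\mathbb{G}_a)$ commutes with $\varphi(\mathbb{G}_{a(r)})$. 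Hence $\psi(s) := \mathcal{E}_{X_0}(s)^{-1}\varphi(s)$ is again a homomorphism $\mathbb{G}_{a(r)}\to H$; it is trivial on $\mathbb{G}_{a(1)}$, so it factors as $\psi = \psi'\circ F$ with $\psi'\colon \mathbb{G}_{a(r-1)}\to H$, and by induction $\psi' = \mathcal{E}_*(Y_0,\dots,Y_{r-2})$ for some commuting tuple. Since $\mathcal{E}_{X_0}(\mathbb{G}_a)$ commutes with $\psi'(\mathbb{G}_{a(r-1)})$, the element $\mathrm{Ad}\bigl(\mathcal{E}_{X_0}(s)\bigr)$ fixes every $d\psi'(u_j)$; plugging this into the triangular identity $d\psi'(u_i) = Y_i + P_i(Y_0,\dots,Y_{i-1})$ and using that $Y \mapsto d\mathcal{E}_Y(\gamma_b)$ is $\mathrm{Ad}$-equivariant (once more by $H$-equivariance of $\mathcal{E}$), an induction on $i$ shows $\mathrm{Ad}\bigl(\mathcal{E}_{X_0}(s)\bigr)$ fixes each $Y_i$, so $[X_0,Y_i]=0$. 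Thus $(X_0,Y_0,\dots,Y_{r-2})$ is a commuting tuple, and by construction $\mathcal{E}_*(X_0,Y_0,\dots,Y_{r-2}) = \mathcal{E}_{X_0}\cdot(\psi'\circ F) = \varphi$.

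The step I expect to be the main obstacle is this last point in the surjective direction: verifying that the new leading term $X_0$ commutes with the tuple supplied by the inductive hypothesis. It rests on pinning down the scheme-theoretic centralizer equality $C_H(\mathcal{E}_X(\mathbb{G}_{a(1)})) = C_H(\mathcal{E}_X(\mathbb{G}_a))$ correctly and then pushing it through the triangular description of $d\psi'(u_i)$; granting that, the differential computation, the factorization $\psi = \psi'\circ F$, and the base case are all routine.
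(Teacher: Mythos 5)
Your proof is correct, but it reaches surjectivity by a genuinely different route than the paper. The paper constructs the preimage tuple explicitly by successive correction, setting $X_i = d\phi(u_i) - d(\mathcal{E}_{X_0}\cdots\mathcal{E}_{X_{i-1}}^{(i-1)})(u_i)$, invoking Lemma~\ref{equal} (primitivity of the difference) to see that each correction lies in $\mathfrak{h}$, and then using axiom (2) of Definition~\ref{map} directly --- $\mathcal{E}_{X_0}(\mathbb{G}_a)$ acts trivially on $C_{\mathfrak{h}}(X_0)$ --- to force $X_i^{[p]}=0$ and the required commutations. You instead induct on $r$ by peeling off $\mathcal{E}_{X_0}$, and your engine is the centralizer identity $C_H(\mathcal{E}_X(\mathbb{G}_{a(1)}))=C_H(X)=C_H(\mathcal{E}_X(\mathbb{G}_a))$, which follows from $H$-equivariance alone. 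This buys something real: Lemma~\ref{equal} becomes unnecessary, axiom (2) is quarantined to the well-definedness of $\mathcal{E}_*$ (the commutation lemma preceding the theorem), and $[X_0,Y_i]=0$ falls out of the $\mathrm{Ad}$-equivariance of $Y\mapsto d\mathcal{E}_Y(\gamma_b)$ fed into your triangular identity $d\psi'(u_i)=Y_i+P_i(Y_0,\dots,Y_{i-1})$; what the paper's version buys in exchange is an explicit closed-form description of $\mathcal{E}_*^{-1}$. The one point you must pin down --- and you correctly flag it --- is that every centralizer in your argument has to be read scheme-theoretically, on $R$-points for all commutative $k$-algebras $R$, since $\varphi(\mathbb{G}_{a(r)})$ and $\mathcal{E}_X(\mathbb{G}_{a(1)})$ are infinitesimal and have only one $k$-point; this is legitimate because $H$ and $\mathcal{N}_1(\mathfrak{h})$ are reduced, so the equivariance of $\mathcal{E}$ on $k$-points upgrades to an identity of morphisms $H\times\mathcal{N}_1(\mathfrak{h})\to H$ and hence to functorial equivariance, after which ``$h$ centralizes $\mathcal{E}_X(\mathbb{G}_{a(1)})$ iff $\mathrm{Ad}(h)X=X$'' and ``$\mathrm{Ad}(h)X=X$ implies $h$ centralizes $\mathcal{E}_X(\mathbb{G}_a)$'' both go through for $R$-valued $h$. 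Your injectivity argument (triangularity of $m\mapsto d\varphi(u_m)$ via the diagonal-then-multiply factorization) is a mild repackaging of computations the paper also performs, and is likewise correct.
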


\begin{proof}
To show that $\mathcal{E}_*$ is injective we reproduce the argument given in the proof of \cite[Theorem 9.6]{M}. Suppose that $\mathcal{E}_{X_0}\mathcal{E}_{X_1}^{(1)} \cdots \mathcal{E}_{X_{r-1}}^{(r-1)} = \mathcal{E}_{Y_0}\mathcal{E}_{Y_1}^{(1)} \cdots \mathcal{E}_{Y_{r-1}}^{(r-1)}$. Because these homomorphisms agree when restricted to $\mathbb{G}_{a(1)}$, we must have that $X_0 = Y_0$. We can therefore multiply each homomorphism by $\mathcal{E}_{-X_0}$, and since $\mathcal{E}_{X_0}\mathcal{E}_{-X_0} = Id.$, it follows that $\mathcal{E}_{X_1}^{(1)} \cdots \mathcal{E}_{X_{r-1}}^{(r-1)} = \mathcal{E}_{Y_1}^{(1)} \cdots \mathcal{E}_{Y_{r-1}}^{(r-1)}$.  This says that

\vspace{0.05in}
\begin{center}$(\mathcal{E}_{X_1} \cdots \mathcal{E}_{X_{r-1}}^{(r-2)}) \circ F = (\mathcal{E}_{Y_1} \cdots \mathcal{E}_{Y_{r-1}}^{(r-2)}) \circ F$\end{center}
\vspace{0.05in}

\noindent when restricted to $\mathbb{G}_{a(r)}$.  As $F(\mathbb{G}_{a(r)}) = \mathbb{G}_{a(r-1)}$, we can proceed by induction to see that $X_i = Y_i$ for all $i$.

To prove that $\mathcal{E}_*$ is surjective, suppose that $\phi: \mathbb{G}_{a(r)} \rightarrow H$ is an infinitesimal one-parameter subgroup of $H$. We define the following sequence of elements in $Dist(H)$:

\vspace{0.05in}
\begin{center}$X_0 = d\phi(u_0); \qquad X_i = d\phi(u_i) - d(\mathcal{E}_{X_0}\mathcal{E}_{X_1}^{(1)}\cdots \mathcal{E}_{X_{i-1}}^{(i-1)})(u_i)$\end{center}
\vspace{0.05in}

We will prove by induction that this defines a sequence of commuting elements in $\mathcal{N}_1(\mathfrak{h})$, and further that for any $i \le r$, the homomorphism
$\mathcal{E}_{X_0}\mathcal{E}_{X_1}^{(1)}\cdots\mathcal{E}_{X_{i-1}}^{(i-1)}$ is the same as $\phi$ when restricted to $\mathbb{G}_{a(i)}$.

First we consider the case of $i=1$.   Because $u_0^p = 0$, we have that $X_0 \in \mathcal{N}_1(\mathfrak{h})$, and that $\mathcal{E}_{X_0}$ and $\phi$ are equal on $\mathbb{G}_{a(1)}$.  By Lemma \ref{equal}, the element $X_1 = d\phi(u_1) - d\mathcal{E}_{X_0}(u_1)$ lies in $\mathfrak{h}$. To see that $X_1^p=0$, we note first that both $d\phi(u_1)$ and $d\mathcal{E}_{X_0}(u_1)$ commute with $X_0$, as $u_0$ and $u_1$ commute in $Dist(\mathbb{G}_a)$ and $X_0 = d\phi(u_o) = d\mathcal{E}_{X_0}(u_0)$. Thus $X_1 \in C_{\mathfrak{h}}(X_0)$.  By assumption the one parameter subgroup of $H$ given by $\mathcal{E}_{X_0}$ acts trivially in the adjoint action on $X_1$, so that $d\mathcal{E}_{X_0}(u_j)$ commutes with $X_1$ for all $j$. But $[d\mathcal{E}_{X_0}(u_1),X_1] = 0$ if and only if $[d\mathcal{E}_{X_0}(u_1),d\phi(u_1)] = 0$, therefore

\vspace{0.05in}
\begin{center}$X_1^p = \left(d\phi(u_1) - d\mathcal{E}_{X_0}(u_1)\right)^p = d\phi(u_1)^p - d\mathcal{E}_{X_0}(u_1)^p = 0$\end{center}
\vspace{0.05in}

\noindent so that $X_1 \in \mathcal{N}_1(\mathfrak{h})$.

Suppose now that $i \ge 2$ and that $X_0, \ldots X_{i-1}$ have been chosen as above so that
$\mathcal{E}_{X_0}\cdots \mathcal{E}_{X_{i-2}}^{(i-2)}$ is the same as $\phi$ when restricted to $\mathbb{G}_{a(i-1)}$, and that $X_{i-1} =
d\phi(u_{i-1}) - d\text{exp}_{X_0}\cdots\text{exp}_{X_{i-2}}^{(i-2)}(u_{i-1})$ is $p$-nilpotent.  To complete the inductive step we must show that
$\mathcal{E}_{X_0}\cdots\mathcal{E}_{X_{i-1}}^{(i-1)}$ is equal to $\phi$ on $\mathbb{G}_{a(i)}$, and that $d\mathcal{E}_{X_0}\cdots\mathcal{E}_{X_{i-1}}^{(i-1)}(u_{i})$ commutes with $d\phi(u_{i})$ so that $X_{i}^p=0$.

To prove the first claim, since $\mathcal{E}_{X_0}\cdots\mathcal{E}_{X_{i-1}}^{(i-1)}$ is equal to $\phi$ on $\mathbb{G}_{a(i-1)}$, we only need to show that $d(\mathcal{E}_{X_0}\cdots\mathcal{E}_{X_{i-1}}^{(i-1)})(u_{i-1}) = d\phi(u_{i-1})$.  Following the general argument presented in \cite[Section 2]{So1}, we first note that we can factor $\mathcal{E}_{X_0}\cdots \mathcal{E}_{X_{i-1}}^{(i-1)}$ as

$$\mathbb{G}_a \xrightarrow{\delta} \mathbb{G}_a \times \mathbb{G}_a \xrightarrow{\mathcal{E}_{X_0}\cdots\mathcal{E}_{X_{i-2}}^{(i-2)} \, \times \, \mathcal{E}_{X_{i-1}}^{(i-1)}} H
\times H \xrightarrow{m} H$$

\vspace{0.05in}
\noindent where $m$ is the multiplication map on $H$ and $\delta$ is the diagonal map $s \mapsto (s,s)$. By the reasoning given in the proof of \cite[Proposition 2.3]{So1}, it
then follows that

\begin{eqnarray*}
d(\mathcal{E}_{X_0}\cdots\mathcal{E}_{X_{i-1}}^{(i-1)})(u_{i-1}) & = & d(\mathcal{E}_{X_0}\cdots\mathcal{E}_{X_{i-2}}^{(i-2)})(u_{i-1}) + d\mathcal{E}_{X_{i-1}}(u_0)\\
& = & d(\mathcal{E}_{X_0}\cdots\mathcal{E}_{X_{i-2}}^{(i-2)})(u_{i-1}) + X_{i-1}\\
& = & d\phi(u_{i-1})\\
\end{eqnarray*}

Therefore these maps agree on $Dist(\mathbb{G}_{a(i)})$.  For the second claim of the inductive step, we see that $X_{i}$ commutes with $X_0$ for the same reason that $X_1$ did above. Hence $X_{i}$ commutes with $d\mathcal{E}_{X_0}(u_1)$, and so commutes with $d\phi(u_1) - d\mathcal{E}_{X_0}(u_1) = X_1$.  Proceeding in this way, we see that $X_{i}$ commutes with $X_j$ for $j \le i$, hence with $\mathcal{E}_{X_j}(u_{\ell})$ for all $\ell$, and therefore $X_i$ commutes with $d\mathcal{E}_{X_0}\cdots \mathcal{E}_{X_{i-1}}^{(i-1)}(u_{i})$.  This implies that $X_{i}$ is $p$-nilpotent, completing the inductive step.

\end{proof}

We conclude this section with an example which demonstrates that it is not always true that there is an identification between $\mathcal{C}_r(\mathcal{N}_1(\mathfrak{h}))$ and $\text{Hom}_{grp/k}(\mathbb{G}_{a(r)},H)$.

\begin{ex}

Assume that $p > 2$.  Let $H$ be the ``fake Heisenberg group" described in \cite{BD}, which is a 2-dimensional connected nonabelian unipotent algebraic group over $k$ (and cleverly named, as the ``true" Heisenberg group is the smallest nonabelian connected unipotent group in characteristic $0$).  It is isomorphic as a variety to $\mathbb{A}^2$, and has group structure given by

$$(a,b) \cdot (c,d) = \left(a+c,b+d+\frac{1}{2}(a^pc - ac^p)\right)$$

We have then that $k[H] \cong k[X,Y]$, with Hopf algebra structure given by $$\Delta(X) = X \otimes 1 + 1 \otimes X, \; \Delta(Y) = Y \otimes 1 + 1 \otimes Y + \frac{1}{2}(X^p \otimes X - X \otimes X^p).$$

Let $\epsilon$ be the counit on $k[H]$.  As $(0,0)$ is the identity element in $H$, we have that $\epsilon(X) = 0 = \epsilon(Y)$, so that $$k[H_{(r)}] \cong k[X,Y]/(X^{p^r},Y^{p^r}).$$  When $r = 1$, the comultiplication of the image of $Y$ in $k[H_{(1)}]$ is primitive, thus $H_{(1)} \cong \mathbb{G}_{a(1)} \times \mathbb{G}_{a(1)}$.  We see then that $\mathfrak{h}$ is two-dimensional with trivial bracket, so that $$\mathcal{C}_r(\mathcal{N}_1(\mathfrak{h})) \cong \mathbb{A}^{2r}.$$  On the other hand, the variety $\text{Hom}_{grp/k}(\mathbb{G}_{a(r)},H)$ can be computed by looking directly at all Hopf algebra maps from $k[H]$ to $k[t]/(t^{p^r})$.  Let $f$ be such a map.  We note that as $X$ is primitive, it must be sent to a primitive element in $k[t]/(t^{p^r})$.  Thus

\begin{equation}\label{Ximage}
f(X) = a_0t + a_1t^p + \cdots + a_{r-1}t^{p^{r-1}}
\end{equation}

Now consider $f(Y)$.  Let $\tau$ be the twist map $\tau(x \otimes y) = y \otimes x$.  As $k[t]/(t^{p^r})$ is cocommutative, and $f$ is assumed to be a Hopf algebra map, we must have that $$\tau \circ (f \otimes f) \circ \Delta(Y) = (f \otimes f) \circ \Delta(Y).$$  Therefore

\begin{eqnarray*}
& 1 \otimes f(Y) + f(Y) \otimes 1 +  \frac{1}{2}\left(f(X) \otimes f(X^p) - f(X^p) \otimes f(X) \right)\\
= & f(Y) \otimes 1 + 1 \otimes f(Y) + \frac{1}{2}\left(f(X^p) \otimes f(X) - f(X) \otimes f(X^p)\right)\\
\end{eqnarray*}

From this we deduce that $f(X^p) \otimes f(X) = f(X) \otimes f(X^p)$.  As the elements $t^i \otimes t^j, \quad 0 \le i,j < p^r$, are linearly independent in $k[t]/(t^{p^r}) \otimes k[t]/(t^{p^r})$, the above equality can only hold when $f(X^p) = 0$.  This of course implies that in (\ref{Ximage}), only $a_{r-1} \ne 0$.   With that established, we then see that $f(Y)$ can be any primitive element in $k[t]/(t^{p^r})$.

In conclusion, any Hopf algebra map $f:k[H] \rightarrow k[t]/(t^{p^r})$ is given by $$f(X) = a_{r-1}t^{p^{r-1}}, \quad f(Y) = b_0t + b_1t^p + \cdots + b_{r-1}t^{p^{r-1}}$$ where $a_{r-1},b_i \in k$.  Therefore $$\text{Hom}_{grp/k}(\mathbb{G}_{a(r)},H) \cong \mathbb{A}^{r+1}.$$  If $r>1$, we see then that  $\text{Hom}_{grp/k}(\mathbb{G}_{a(r)},H) \ncong \mathcal{C}_r(\mathcal{N}_1(\mathfrak{h}))$.

\end{ex}

\section{Exponential Maps for Reductive Groups}

Suppose now that $G$ is a connected reductive group over $k$ and $p$ is good for $G$.  In this section we will prove the existence of a canonical $G$-equivariant bijection from $\mathcal{N}_1(\mathfrak{g})$ to $\mathcal{U}_1(G)$ which is an exponential map in pretty good characteristic (in fact, this canonical bijection is also an exponential map if $G$ is the derived subgroup of a reductive group in pretty good characteristic).  The existence of this exponential map when $p \ge h$ (and a few other cases) can be deduced from a result by Carlson, Lin, and Nakano \cite[Theorem 3]{CLN}.

It was proved in \cite{NPV} that if $G$ is connected reductive in good characteristic, then there exists a parabolic subgroup $P_J \le G$ with unipotent radical $U_J$ for which

\vspace{0.05in}
\begin{center} $G \cdot \mathfrak{u}_J = \mathcal{N}_1(\mathfrak{g}).$\end{center}
\vspace{0.05in}

\noindent (Of course, if there is one such parabolic then there are many, however up to conjugation we may restrict our attention to standard parabolics).  It was further shown in \cite[Theorem 2]{CLN} that one may choose $P_J$ so that the nilpotence class of $\mathfrak{u}_J$ is less than $p$, and

\vspace{0.05in}
\begin{center}$G \cdot U_J = \mathcal{U}_1(G).$\end{center}
\vspace{0.05in}

We note that this latter result was stated for simple groups in good characteristic, but is also true for connected reductive groups.  Indeed, if $G$ is a connected reductive group, then by embedding $G$ into some $GL_n$, we see that every $p$-unipotent element $u \in G$ is, up to conjugation, in $G \,\cap\, U_n$, where $U_n$ is the group of strictly upper-triangular matrices of $GL_n$.  Hence $u$ is in a maximal unipotent subgroup of $G$, and hence is in $G^{\prime}$, so $\mathcal{U}_1(G) = \mathcal{U}_1(G^{\prime})$.  On the other hand, by \cite[Proposition 14.26]{B}, every nilpotent element in $\mathfrak{g}$ is in the Lie algebra of some maximal unipotent subgroup, hence is an element in $\mathfrak{g}^{\prime}$.  Thus $\mathcal{N}_1(\mathfrak{g}) = \mathcal{N}_1(\mathfrak{g}^{\prime})$.  We may therefore assume that $G$ is a connected semisimple group in good characteristic.  Let $G_{sc}$ denote the semisimple simply-connected group having the same root system as $G$.  Then $G_{sc}$ is the direct product of simple, simply-connected groups in good characteristic, hence the results above hold for $G_{sc}$.  But the isogeny $G_{sc} \rightarrow G$ induces bijections (though not necessarily isomorphisms) $\mathcal{U}_1(G_{sc}) \rightarrow \mathcal{U}_1(G)$ and $\mathcal{N}_1(\mathfrak{g}_{sc}) \rightarrow \mathcal{N}_1(\mathfrak{g})$ .  Hence the result holds for $G$.

When the nilpotence class of $\mathfrak{u}_J$ is less than $p$, we know by a result due to Serre that there exists a unique exponential map on $\mathfrak{u}_J$ which arises from base-changing the exponential isomorphism in characteristic $0$ (see \cite[\S 8]{M} for a nice account of this). Before stating this result precisely, we recall that if $\mathfrak{u}_J$ has nilpotence class less than $p$ then it can be made into a group via the Baker-Campbell-Hausdorff formula.  Henceforth, any reference to a Lie algebra (of nilpotence class less than $p$) as an algebraic group will be assuming this group structure.

\begin{prop}\label{Serre}\cite[Proposition 5.3]{Sei}
Let $P$ be a parabolic subgroup of $G$ with unipotent radical $U$, and suppose that $U$ has nilpotence class less than $p$.  Then there is a unique $P$-equivariant isomorphism of algebraic groups

\vspace{0.06in}
\begin{center} $\varepsilon_P: \mathfrak{u} \xrightarrow{\sim} U$ \end{center}
\vspace{0.06in}

\noindent with tangent map the identity on $\mathfrak{u}$.  Further, if for some $\alpha \in \Phi$, $\varphi_{\alpha}$ is a root homomorphism of $G$ (with respect to $T$) factoring through $U$, then $\varepsilon_P(se_{\alpha}) = \varphi_{\alpha}(s)$ for all $s \in k$. 
\end{prop}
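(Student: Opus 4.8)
The plan is to obtain existence by base-changing the characteristic-zero exponential isomorphism to $k$, and then to see that uniqueness and the compatibility with root homomorphisms are forced purely by $T$-equivariance together with the fact that $\Phi$ is reduced. For existence, I would fix a Chevalley $\mathbb{Z}$-form of $G$ compatible with $T$ and with the root homomorphisms $\varphi_\alpha$; inside it $U$ and $\mathfrak{u}$ have $\mathbb{Z}$-forms $U_{\mathbb{Z}},\mathfrak{u}_{\mathbb{Z}}$ spanned by root vectors. Over $\mathbb{Q}$, with $\mathfrak{u}_{\mathbb{Q}}$ given its Baker--Campbell--Hausdorff group structure, the usual exponential $\exp\colon\mathfrak{u}_{\mathbb{Q}}\xrightarrow{\sim}U_{\mathbb{Q}}$ is a $P_{\mathbb{Q}}$-equivariant isomorphism of algebraic groups with tangent map the identity, and in root coordinates $\exp$ and $\log$ are polynomial maps. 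The essential input is Serre's observation (see \cite[\S 8]{M}): since the nilpotence class $c$ of $\mathfrak{u}$ satisfies $c<p$, the truncated BCH series, hence the coefficients of $\exp$ and $\log$ in root coordinates, involve the structure constants of $\mathfrak{g}$ divided only by integers $\le c$, so these coefficients lie in $\mathbb{Z}_{(p)}$. Thus $\exp$ and $\log$ are defined over $\mathbb{Z}_{(p)}$ and base change to mutually inverse morphisms over $k$; let $\varepsilon_P$ be the resulting map. That its tangent map is the identity, that it intertwines BCH-multiplication with multiplication in $U$, and that it is $P$-equivariant are all identities of polynomial maps valid over $\mathbb{Q}$, hence over $\mathbb{Z}_{(p)}$, hence over $k$.

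For uniqueness and the last assertion I would isolate the following: any morphism $\psi\colon\mathfrak{u}\to\mathfrak{u}$ that is $T$-equivariant for the adjoint action, is a homomorphism of the BCH group, and has $d\psi=\mathrm{id}$, is the identity. Decomposing $\mathfrak{u}=\bigoplus_\beta\mathfrak{g}_\beta$ over the (positive) roots $\beta$ with $\mathfrak{g}_\beta\subseteq\mathfrak{u}$ and writing $\psi(se_\beta)=\sum_\gamma f_\gamma(s)e_\gamma$, the identity $\psi(t\cdot(se_\beta))=t\cdot\psi(se_\beta)$ gives $f_\gamma(\beta(t)s)=\gamma(t)f_\gamma(s)$, so every monomial $s^n$ occurring in $f_\gamma$ has $n\beta=\gamma$; as $\Phi$ is reduced this forces $\gamma=\beta$ and $n=1$, so $\psi(se_\beta)=f_\beta(s)e_\beta$ is linear and $d\psi=\mathrm{id}$ gives $f_\beta(s)=s$. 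Since $p$ is good, $[\mathfrak{g}_\gamma,\mathfrak{g}_\delta]=\mathfrak{g}_{\gamma+\delta}$ whenever $\gamma+\delta$ is a root, so $\mathfrak{u}$ is generated as a Lie algebra by the $\mathfrak{g}_\beta$ with $\beta$ not a sum of two roots occurring in $\mathfrak{u}$ (induction on height); hence the connected algebraic group $\mathfrak{u}$ is generated by the one-parameter subgroups $s\mapsto se_\beta$, and a group homomorphism fixing all of these is the identity. Applying this to $\psi=\varepsilon^{-1}\circ\varepsilon'$ for two maps as in the statement yields uniqueness. For the root-homomorphism property, the same weight argument applied to $\varepsilon_P$ itself, now with values in $U=\prod_\beta U_\beta$ and with $\varphi_\alpha$ a root homomorphism through $U$, shows $\varepsilon_P(se_\alpha)=\varphi_\alpha(cs)$ for a scalar $c$, and comparing differentials forces $c=1$.

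The main obstacle is Serre's integrality statement: bounding the denominators of the BCH coefficients (equivalently, of $\exp$ and $\log$ in root coordinates) and checking that in nilpotence class $<p$ they are prime to $p$. Everything else is either base change of polynomial identities or, for uniqueness and the root compatibility, a soft consequence of the torus action and the reducedness of $\Phi$, independent of characteristic zero. One small point to verify along the way is that the closed subgroup generated by the subgroups $s\mapsto se_\beta$ really is all of $\mathfrak{u}$: it is connected, and its Lie algebra is a Lie subalgebra containing every generating $\mathfrak{g}_\beta$, hence equals $\mathfrak{u}$, forcing the subgroup to have full dimension and therefore to be $\mathfrak{u}$.
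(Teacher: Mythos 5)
The paper does not actually prove this proposition—it is quoted from Seitz \cite[Proposition 5.3]{Sei} (with the root-subgroup addendum taken from the proof of \cite[Proposition 5.2]{Sei}), and the surrounding discussion indicates precisely the route you follow: base-changing the characteristic-zero exponential using Serre's integrality of the BCH/exponential coefficients in nilpotence class $<p$. Your reconstruction is correct and matches that cited argument—the $T$-weight computation giving uniqueness and $\varepsilon_P(se_{\alpha})=\varphi_{\alpha}(s)$ is sound (indeed, since the weight argument applies to every root line, the detour through good primes and commutator relations is unnecessary: the lines $ke_{\beta}$ already generate a closed connected subgroup of full dimension), and the one genuinely hard input, the $\mathbb{Z}_{(p)}$-integrality, is correctly isolated and attributed rather than glossed over.
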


\begin{remark}
The last statement of the proposition regarding root subgroups is established in the proof of \cite[Proposition 5.2]{Sei}.  Note that it applies to any root with respect to some maximal torus $T$, and also any chosen root homomorphism (for the choice of $e_{\alpha}$ corresponds to the chosen root homomorphism).
\end{remark}

\bigskip
It was the idea of Carlson-Lin-Nakano in \cite{CLN} to use these results above to obtain a type of exponential map on the restricted nullcone by extending $\varepsilon_{P_J}$. Indeed this was achieved in Theorem 3 of \emph{loc. cit.} under the assumption that $\mathcal{N}_1(\mathfrak{g})$ is a normal variety.  This assumption is known to hold in particular cases, for example if $p \ge h$ (in which case the entire nullcone is restricted), and also for simple groups of type $A$.

We now show that $\varepsilon_{P_J}$ always extends to a well-defined map on $\mathcal{N}_1(\mathfrak{g})$, regardless of whether or not this variety is normal.  Furthermore, we show that this extended map is independent of the choice of $J \subseteq \Pi$.  As noted in \cite{CLNP}, the choice of $J$ for which $G \cdot \mathfrak{u}_J = \mathcal{N}_1(\mathfrak{g})$ is not necessarily unique, and the exponential map in \cite[Theorem 3]{CLN} is seemingly dependent this choice (of course in the case that $p \ge h$, this is not an issue).

\begin{prop}\label{choice}

Let $I, J \subseteq \Pi$ be such that both $U_I$ and $U_J$ are of nilpotence class less than $p$.

\begin{enumerate}

\item If $g \in G$ and $x \in \mathfrak{u}_J$ are such that $g \cdot x \in \mathfrak{u}_J$, then $\varepsilon_{P_J}(g \cdot x) = g \cdot \varepsilon_{P_J}(x).$

\item If $g \in G$, $x \in \mathfrak{u}_J$, and $y \in \mathfrak{u}_I$ are such that $y = g \cdot x$, then $\varepsilon_{P_I}(y) = g \cdot \varepsilon_{P_J}(x)$.

\end{enumerate}

\end{prop}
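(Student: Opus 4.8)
The plan is to reduce (2) to a descent from characteristic zero, after first noting that (1) is the special case $I=J$ of (2): given $g$ and $x\in\mathfrak u_J$ with $g\cdot x\in\mathfrak u_J$, set $y:=g\cdot x$ and apply (2) with $I=J$. So it suffices to prove (2).

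The conceptual heart is that $\varepsilon$ is \emph{natural under conjugation}: for any parabolic $P$ of $G$ with $R_u(P)$ of nilpotence class less than $p$ and any $h\in G$, the composite $\mathrm{Ad}(h)\circ\varepsilon_P\circ\mathrm{Ad}(h^{-1})$ is a $(hPh^{-1})$-equivariant isomorphism of algebraic groups $\mathfrak u_{hPh^{-1}}\to R_u(hPh^{-1})$ with tangent map the identity (each property being a conjugate of the corresponding property of $\varepsilon_P$), so by the uniqueness clause of Proposition~\ref{Serre} it equals $\varepsilon_{hPh^{-1}}$; that is, $\mathrm{Ad}(h)\circ\varepsilon_P=\varepsilon_{hPh^{-1}}\circ\mathrm{Ad}(h)$. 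Taking $P=P_J$ and $h=g$, the right side of (2) becomes $\varepsilon_{gP_Jg^{-1}}(y)$, and since $y=g\cdot x$ lies both in $\mathrm{Ad}(g)\mathfrak u_J=\mathfrak u_{gP_Jg^{-1}}$ and in $\mathfrak u_I$, statement (2) is reduced to: \emph{for parabolics $Q,Q'$ whose unipotent radicals have nilpotence class less than $p$ and any $y\in\mathfrak u_Q\cap\mathfrak u_{Q'}$, one has $\varepsilon_Q(y)=\varepsilon_{Q'}(y)$} --- the Serre exponential of $y$ does not depend on which parabolic's nilradical one views it in.

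To prove this I would exploit that, by Serre's theorem (cf.\ \cite[\S 8]{M}), each $\varepsilon_Q$ is obtained by base change along $\mathbb Z_{(p)}\to k$ from a morphism over $\mathbb Z_{(p)}$ --- the Baker--Campbell--Hausdorff and exponential series truncating to polynomial maps with $\mathbb Z_{(p)}$-coefficients precisely because the nilpotence class is $<p$, so all denominators that occur divide $(p-1)!$ --- whose base change to $\mathbb Q$ is the single, parabolic-independent exponential isomorphism on nilpotent elements. Over $\mathbb Q$ the two maps therefore agree on the common domain. Concretely, fixing a split reductive $\mathbb Z_{(p)}$-model $\mathbf G$ of $G$ with standard parabolics $P_{\mathbf I},P_{\mathbf J}$, one forms the incidence scheme $\mathbf Y=\{(g,x):\mathrm{Ad}(g)x\in\mathfrak u_{\mathbf I}\}\subseteq\mathbf G\times\mathfrak u_{\mathbf J}$ over $\mathbb Z_{(p)}$, carrying the two morphisms $(g,x)\mapsto\varepsilon_{P_{\mathbf I}}(\mathrm{Ad}(g)x)$ and $(g,x)\mapsto g\varepsilon_{P_{\mathbf J}}(x)g^{-1}$; these agree on $\mathbf Y_{\mathbb Q}$ by the classical identity $\exp(\mathrm{Ad}(g)x)=g\exp(x)g^{-1}$, and a flatness argument then forces agreement on $\mathbf Y(k)$, which applied to the given $k$-point $(g,x)$ yields (2).

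The main obstacle is exactly this descent. In characteristic $p$ the normalization in Proposition~\ref{Serre} --- $P$-equivariant with tangent map the identity --- does \emph{not} pin down a homomorphism $\mathbb G_a\to G$ along the line through a given element (one may always insert Frobenius-twisted factors), so the equality of the two maps cannot be seen formally; the ``no Frobenius twist'' information is supplied only by the characteristic-zero provenance of $\varepsilon$, and transferring it requires knowing that the generic fibre of $\mathbf Y$ (a degeneracy locus whose fibres over $\mathbf G$ are linear subspaces of $\mathfrak u_J$ that jump in dimension) is schematically dense, i.e.\ that $\mathbf Y$ is flat over $\mathbb Z_{(p)}$ --- which is not automatic and is the point needing genuine work. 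An alternative, cleaner when available, is to fix a faithful representation $\rho\colon G\hookrightarrow GL(V)$ of exponential type: Proposition~\ref{Serre}'s uniqueness then identifies $\rho\circ\varepsilon_{P_J}$ with the truncated matrix exponential $\mathrm{Exp}\circ d\rho|_{\mathfrak u_J}$, and likewise for $I$, after which (1) and (2) follow from the matrix identity $\mathrm{Exp}(AMA^{-1})=A\,\mathrm{Exp}(M)\,A^{-1}$ and faithfulness of $\rho$.
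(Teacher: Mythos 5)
Your reduction is sound as far as it goes: part (1) is indeed the case $I=J$ of part (2), and the identity $\mathrm{Ad}(h)\circ\varepsilon_P=\varepsilon_{hPh^{-1}}\circ\mathrm{Ad}(h)$, obtained from the uniqueness clause of Proposition \ref{Serre}, is exactly the paper's own first step. The problem is the step you yourself flag: the central claim that $\varepsilon_Q(y)=\varepsilon_{Q'}(y)$ for $y\in\mathfrak u_Q\cap\mathfrak u_{Q'}$ is left resting on a flatness/schematic-density assertion for the incidence scheme $\mathbf Y$ over $\mathbb Z_{(p)}$ that you do not prove and that is not automatic --- the fibres of $\mathbf Y\to\mathbf G$ are linear subspaces that jump in dimension, so the special fibre can acquire components not meeting the generic fibre, and then agreement over $\mathbb Q$ says nothing about those points. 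Your fallback, a faithful representation of exponential type, is not available in the generality of the proposition: the point of this section is to produce an exponential map for reductive $G$ in good characteristic \emph{without} assuming $G$ sits inside some $GL_n$ as a subgroup of exponential type, and such embeddings are not known to exist in general (e.g.\ for exceptional groups in small good characteristic). So as written the argument has a hole exactly at its key claim.

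The paper closes this gap by an elementary argument that never leaves characteristic $p$. Writing $g=b_1nb_2$ via the Bruhat decomposition and using the $B$-equivariance of $\varepsilon_{P_I}$ and $\varepsilon_{P_J}$ (note $B\le P_I\cap P_J$), one reduces to $g=n\in N_G(T)$. Then the two parabolics being compared, $P_I$ and $nP_Jn^{-1}$, both contain $T$, so the intersection of their nilradicals is $T$-stable and hence spanned by root vectors $e_\alpha$, and the intersection of the unipotent radicals is generated by the corresponding root subgroups. The final clause of Proposition \ref{Serre}, namely $\varepsilon_P(se_\alpha)=\varphi_\alpha(s)$, forces the two Serre isomorphisms to agree on these root elements; since both restrict to group homomorphisms on the intersection (for the Baker--Campbell--Hausdorff structure) and the root elements generate it as a group, they agree on all of it. This combination --- Bruhat reduction to the torus normalizer, $T$-stability of the intersection, and the root-subgroup normalization in Proposition \ref{Serre} --- is precisely the ingredient your descent argument is trying to substitute for, and is what you would need to supply to complete your proof.
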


\begin{proof}
(1) Suppose first that $g \in N_G(T)$. Conjugation by $g$ sends $P_J$ to some other parabolic subgroup $P$ with unipotent radical $U$.  This conjugation action also defines algebraic group isomorphisms $U_J \xrightarrow{\sim} U$ and $\mathfrak{u}_J \xrightarrow{\sim} \mathfrak{u}$.  Let $\varepsilon: \mathfrak{u} \rightarrow U$ be the map given by $\varepsilon(y) = g \cdot \varepsilon_{P_J}(g^{-1} \cdot y)$.  Then by the preceding remarks we see that $\varepsilon$ is an isomorphism of algebraic groups from $\mathfrak{u}$ to $U$ which is easily seen to be $P$-equivariant.  Furthermore, it is also clear that the tangent map of $\varepsilon$ is the identity on $\mathfrak{u}$, hence that $\varepsilon$ is the unique map $\varepsilon_P$ specified in Proposition \ref{Serre}.

We observe that the condition that both $x \in \mathfrak{u}_J$ and $g \cdot x \in \mathfrak{u}_J$ is equivalent to the condition that $g \cdot x \in \mathfrak{u}_J \cap \mathfrak{u}$.  Therefore, proving that $\varepsilon_{P_J}(g \cdot x) = g \cdot \varepsilon_{P_J}(x)$ for all such $x$ is equivalent to showing that $\varepsilon_{P_J}$ agrees with $\varepsilon_P$ on $\mathfrak{u}_J \cap \mathfrak{u}$.  Indeed, if this is so then we would have from above that $$\varepsilon_{P_J}(g \cdot x) = \varepsilon_P(g \cdot x) = g \cdot \varepsilon_{P_J}(x).$$

Therefore we will show that $\varepsilon_{P_J}$ agrees with $\varepsilon_P$ on $\mathfrak{u}_J \cap \mathfrak{u}$.  Now, since $g$ acts on $T$ it acts on $\Phi$, and if $\alpha \in \Phi$ we will write $g \cdot \alpha$ for this action.  We also have that since $T$ normalizes both $\mathfrak{u}_J$ and $\mathfrak{u}$ it normalizes their intersection, and so $\mathfrak{u}_J \cap \mathfrak{u}$ has a basis consisting of certain $e_{\alpha}$.  In particular it is those $e_{\alpha}$ for which both $\alpha$ and $g^{-1} \cdot \alpha$ are in $(\Phi^+ \backslash \Phi_J^+)$ (this is observed for $P_J=B$ in \cite[I.1.3(b)]{SS}).  In a similar way $U_J \cap U$ is generated by the corresponding root subgroups.  Both $\varepsilon_P$ and $\varepsilon_{P_J}$ then restrict to group isomorphisms between $\mathfrak{u}_J \cap \, \mathfrak{u}$ and $U_J \cap \, U$.  We also have that $\varepsilon_P(se_{\alpha}) = \varphi_{\alpha}(s)= \varepsilon_{P_J}(se_{\alpha})$ for all $e_{\alpha} \in \mathfrak{u}_J \cap \, \mathfrak{u}$ and $s \in k$.  This then says that $\varepsilon_{P_J}$ and $\varepsilon_P$ are group isomorphisms which agree on a set of group generators for $\mathfrak{u}_J \cap \, \mathfrak{u}$, hence they agree on all of $\mathfrak{u}_J \cap \, \mathfrak{u}$.

We have therefore shown that if $g \in N_G(T)$ and $x \in \mathfrak{u}_J$ are such that $g \cdot x \in \mathfrak{u}_J$, then $\varepsilon_{P_J}(g \cdot x) = g \cdot \varepsilon_{P_J}(x)$.  Now let $g$ be any element in $G$ for which both $x$ and $g \cdot x$ are in $\mathfrak{u}_J$.  By the Bruhat decomposition of $G$, we know that $g = b_1nb_2$, where $b_1,b_2 \in B$ and $n \in N_G(T)$ (and recall that $B \le P_J$). We have then that $b_1nb_2 \cdot x \in \mathfrak{u}_J$ if and only if $nb_2 \cdot x \in \mathfrak{u}_J$. We also know that $b_2 \cdot x \in \mathfrak{u}_J$.  By the $P_J$-equivariance of $\varepsilon_{P_J}$ we have that $\varepsilon_{P_J}(b_2 \cdot x) = b_2 \cdot \varepsilon_{P_J}(x)$ and $\varepsilon_{P_J}(b_1 \cdot (nb_1 \cdot x)) = b_1 \cdot \varepsilon_{P_J}(nb_2 \cdot x)$.  Combining these with the above result for $n$ then establishes the first claim.

The proof of (2) is similar. Given $g \cdot x = y \in \mathfrak{u}_I$ for some $x \in \mathfrak{u}_J$, we again can write $g = b_1nb_2$ as above. Replacing $x$ with $b_2 \cdot x \in \mathfrak{u}_J$ and $y$ with $b_1^{-1} \cdot y \in \mathfrak{u}_I$, the $B$-equivariance of both $\varepsilon_{P_J}$ and $\varepsilon_{P_I}$ allow us to reduce to the case that $y = n \cdot x$ for some $n \in N_G(T)$. But then an argument nearly identical to that used in the proof of (1) will also hold here to show that $\varepsilon_{P_I}(n \cdot x) = n \cdot \varepsilon_{P_J}(x)$.
\end{proof}

\begin{thm}\label{exp}
Let $G$ be a connected reductive group over $k$, and assume that $p$ is good for $G$.  Then there is a unique $G$-equivariant bijection

\vspace{0.06in}
\begin{center} $\textup{exp}: \mathcal{N}_1(\mathfrak{g}) \xrightarrow{\sim} \mathcal{U}_1(G)$ \end{center}
\vspace{0.06in}

\noindent with the property that if $U$ is the unipotent radical of a parabolic subgroup $P \le G$ such that $U$ has nilpotence class less than $p$, then $\textup{exp}$ restricts to a $P$-equivariant isomorphism $\mathfrak{u} \xrightarrow{\sim} U$ of algebraic groups having tangent map equal to the identity on $\mathfrak{u}$.
\end{thm}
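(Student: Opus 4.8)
The plan is to build $\exp$ by transporting Serre's isomorphism $\varepsilon_{P_J}$ along the $G$-action, using Proposition \ref{choice} to guarantee that the result is well defined and independent of all choices, and to use the uniqueness clause of Proposition \ref{Serre} to pin down its restriction to arbitrary small-nilpotence-class unipotent radicals.

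First I would fix a subset $J \subseteq \Pi$ as in the results recalled above, so that $\mathfrak{u}_J$ (equivalently $U_J$) has nilpotence class less than $p$ and $G \cdot \mathfrak{u}_J = \mathcal{N}_1(\mathfrak{g})$, $G \cdot U_J = \mathcal{U}_1(G)$. For $X \in \mathcal{N}_1(\mathfrak{g})$, write $X = g \cdot x$ with $g \in G$ and $x \in \mathfrak{u}_J$, and set $\exp(X) := g \cdot \varepsilon_{P_J}(x)$. If also $X = g' \cdot x'$ with $x' \in \mathfrak{u}_J$, then $x' = (g'^{-1}g)\cdot x$ lies in $\mathfrak{u}_J$, so Proposition \ref{choice}(1) gives $\varepsilon_{P_J}(x') = (g'^{-1}g)\cdot\varepsilon_{P_J}(x)$, whence $g'\cdot\varepsilon_{P_J}(x') = g\cdot\varepsilon_{P_J}(x)$; thus $\exp$ is well defined, and it is $G$-equivariant by construction. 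Since $\varepsilon_{P_J}$ maps $\mathfrak{u}_J$ onto $U_J$ and $G \cdot U_J = \mathcal{U}_1(G)$, the map $\exp$ is surjective onto $\mathcal{U}_1(G)$.

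For injectivity I would construct an inverse $\log \colon \mathcal{U}_1(G) \to \mathcal{N}_1(\mathfrak{g})$ in exactly the same fashion: for $u = g \cdot v$ with $v \in U_J$, put $\log(u) := g \cdot \varepsilon_{P_J}^{-1}(v)$. Well-definedness of $\log$ is the group-theoretic analogue of Proposition \ref{choice}(1), and the argument given there should apply essentially verbatim, since that proof treats $U_J \cap U$ and $\mathfrak{u}_J \cap \mathfrak{u}$ symmetrically: it reduces to $g \in N_G(T)$ via the Bruhat decomposition and then identifies $\varepsilon_{P_J}$ (hence $\varepsilon_{P_J}^{-1}$) as a map matching root subgroups with root vectors. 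Granting this, $\exp \circ \log = \mathrm{id}$ and $\log \circ \exp = \mathrm{id}$ are immediate, so $\exp$ is a bijection. I expect this to be the one point requiring genuine care, namely verifying that the conjugation-compatibility of $\varepsilon_{P_J}$ descends to $\varepsilon_{P_J}^{-1}$ — equivalently, that $g\cdot v \in U_J$ forces $g\cdot\varepsilon_{P_J}^{-1}(v)\in\mathfrak{u}_J$; everything else here is formal.

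It remains to verify the parabolic normalization and uniqueness. Given any parabolic $P \le G$ with $\mathfrak{u}$ of nilpotence class less than $p$, write $P = g\cdot P_I$ for a standard parabolic $P_I$; then $\mathfrak{u}_I$ also has nilpotence class less than $p$, and by the uniqueness in Proposition \ref{Serre} one has $\varepsilon_P(y) = g\cdot\varepsilon_{P_I}(g^{-1}\cdot y)$, exactly as in the opening paragraph of the proof of Proposition \ref{choice}. So by $G$-equivariance of $\exp$ it suffices to show $\exp|_{\mathfrak{u}_I} = \varepsilon_{P_I}$ for every standard $P_I$ with $\mathfrak{u}_I$ of small nilpotence class: for $x \in \mathfrak{u}_I \subseteq \mathcal{N}_1(\mathfrak{g}) = G\cdot\mathfrak{u}_J$, write $x = h\cdot x'$ with $x' \in \mathfrak{u}_J$, and Proposition \ref{choice}(2) (applied with the pair $I,J$, the element $x'$, and $y = x$) gives $\varepsilon_{P_I}(x) = h\cdot\varepsilon_{P_J}(x') = \exp(x)$. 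Since $\varepsilon_P$ is by Proposition \ref{Serre} a $P$-equivariant isomorphism of algebraic groups with tangent map the identity on $\mathfrak{u}$, this is precisely the asserted property. Finally, if $\exp'$ is another $G$-equivariant bijection with this property, then $\exp'$ and $\exp$ both restrict to $\varepsilon_{P_J}$ on $\mathfrak{u}_J$ (which has nilpotence class less than $p$), hence agree on $G\cdot\mathfrak{u}_J = \mathcal{N}_1(\mathfrak{g})$ by $G$-equivariance; this gives uniqueness.
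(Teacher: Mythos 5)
Your proposal is correct and follows essentially the same route as the paper: define $\exp$ by transporting $\varepsilon_{P_J}$ along the $G$-action, use Proposition \ref{choice} for well-definedness and uniqueness, and obtain bijectivity by running the same argument for $\varepsilon_{P_J}^{-1}$ to produce an inverse $\log$. The paper's proof is terser (it leaves the parabolic-normalization and uniqueness verifications to "Propositions \ref{Serre} and \ref{choice}"), so your extra detail on those points is a faithful expansion rather than a different argument.
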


\begin{proof}

For all $X \in \mathcal{N}_1(\mathfrak{g})$, we can write $X = g \cdot Y$ for some $g \in G, Y \in \mathfrak{u}_J$.  We then define $\text{exp}(X) = g \cdot \varepsilon_{P_J}(Y)$.  By Proposition \ref{choice}, exp is well-defined since if $X = g_1 \cdot Y_1 = g_2 \cdot Y_2$, with $Y_1, Y_2 \in \mathfrak{u}_J$, then we have

\vspace{0.06in}
\begin{center} $\varepsilon_{P_J}(Y_1) = \varepsilon_{P_J}(g_1^{-1}g_2 \cdot Y_2) = g_1^{-1}g_2 \cdot \varepsilon_{P_J}(Y_2),$ \end{center} 
\vspace{0.06in}

\noindent so that $g_1 \cdot \varepsilon_{P_J}(Y_1) = g_2 \cdot \varepsilon_{P_J}(Y_2)$.  The uniqueness of $\text{exp}$ follows from Propositions \ref{Serre} and \ref{choice}.  To see that exp is a bijection, we note that the argument used in Proposition \ref{choice} applies equally well to the inverse map $\varepsilon_{P_J}^{-1}$, thus it follows that there is a well-defined map $\text{log}: \mathcal{U}_1(G) \rightarrow \mathcal{N}_1(\mathfrak{g})$ which is inverse to exp.
\end{proof}

In general, the map above will not be an expoential map in the sense of Definition \ref{map}, and we must make further assumptions on the prime $p$.  We thank the referee for pointing us in the right direction, namely that it is sufficient for us to require that $p$ be pretty good for $G$.

\begin{thm}
Let $G$ be a connected reductive group in pretty good characteristic.  Then $\textup{exp}$ is an exponential map for $G$.  Hence the map $$\textup{exp}_*:\mathcal{C}_r(\mathcal{N}_1(\mathfrak{g})) \rightarrow \textup{Hom}_{grp/k}(\mathbb{G}_{a(r)},G),$$ with $$\textup{exp}_*(X_0,\cdots,X_{r-1}) = \textup{exp}_{X_0}\cdots \textup{exp}_{X_{r-1}}^{(r-1)} \mid_{\mathbb{G}_{a(r)}},$$ is a bijection.
\end{thm}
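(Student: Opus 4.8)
The plan is to verify that the map $\exp\colon\mathcal{N}_1(\mathfrak{g})\to G$ from Theorem \ref{exp} meets every requirement of Definition \ref{map}, and then to apply Theorem \ref{description} with $H=G$ and $\mathcal{E}=\exp$ to conclude that $\exp_*$ is a bijection. Note first that pretty good characteristic is in particular good, since $\mathbb{Z}\Phi/\mathbb{Z}\Phi'$ is a subgroup of $X/\mathbb{Z}\Phi'$ for every $\Phi'\subseteq\Phi$; hence $\exp$ is defined, and by Theorem \ref{exp} it is a $G$-equivariant bijection of $\mathcal{N}_1(\mathfrak{g})$ onto $\mathcal{U}_1(G)\subseteq G$, so it is $G$-equivariant and injective. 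It therefore remains only to check conditions (1) and (2) of Definition \ref{map}.

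For condition (1), I would fix $X\in\mathcal{N}_1(\mathfrak{g})$ and a standard parabolic $P_J$ with $G\cdot\mathfrak{u}_J=\mathcal{N}_1(\mathfrak{g})$ and $\mathfrak{u}_J$ of nilpotence class less than $p$ (such $P_J$ exist, as recalled before Proposition \ref{Serre}), and write $X=g\cdot Y$ with $g\in G$ and $Y\in\mathfrak{u}_J$. For every $s\in k$ one has $sY\in\mathfrak{u}_J$ and $sX=g\cdot(sY)$, so by the construction of $\exp$ we get $\exp(sX)=g\cdot\varepsilon_{P_J}(sY)$; thus $\exp_X$ factors as $s\mapsto sY$ from $\mathbb{G}_a$ to $(\mathfrak{u}_J,\mathrm{BCH})$, then $\varepsilon_{P_J}$, then conjugation by $g$. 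The first map is a homomorphism of algebraic groups because $[Y,Y]=0$ makes every Baker--Campbell--Hausdorff term past the linear ones vanish, and the other two are homomorphisms by Proposition \ref{Serre}, so $\exp_X$ is a one-parameter additive subgroup of $G$. Following differentials, $u_0\mapsto Y\mapsto Y\mapsto g\cdot Y=X$, the middle equality because the tangent map of $\varepsilon_{P_J}$ is the identity on $\mathfrak{u}_J$; hence $d\exp_X(u_0)=X$. This part is routine.

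Condition (2) is the essential point, and is where the pretty good hypothesis enters. The key observation is that $\exp_X(\mathbb{G}_a)$ commutes with $C_G(X)$: if $h\in C_G(X)$ then $h\cdot(sX)=sX$, and $sX\in\mathcal{N}_1(\mathfrak{g})$ since $(sX)^{[p]}=s^pX^{[p]}=0$, so the $G$-equivariance of $\exp$ gives $h\cdot\exp(sX)=\exp(h\cdot(sX))=\exp(sX)$ for every $s\in k$. Hence $\exp_X(\mathbb{G}_a)\subseteq C_G(C_G(X))$, so conjugation by any $\exp(sX)$ restricts to the identity on the algebraic group $C_G(X)$, and differentiating, the adjoint action of $\exp(sX)$ is the identity on $\mathrm{Lie}(C_G(X))$. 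Now $C_G(X)$ is the stabilizer of $X$ under the adjoint action, so the Lie algebra of the scheme-theoretic stabilizer is $C_{\mathfrak{g}}(X)$; since $p$ is pretty good for $G$, Herpel's theorem on smoothness of centralizers in reductive groups \cite{H} shows that $C_G(X)$ (which is the centralizer of the height one infinitesimal subgroup scheme of $G$ determined by $X$) is smooth, whence $\mathrm{Lie}(C_G(X))=C_{\mathfrak{g}}(X)$. Therefore $\exp_X(\mathbb{G}_a)$ acts trivially in the adjoint action on $C_{\mathfrak{g}}(X)$, which is condition (2).

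Having verified Definition \ref{map}, we conclude that $\exp$ is an exponential map for $G$, and Theorem \ref{description} then yields the asserted bijection $\exp_*$. The step I expect to be the main obstacle is the identification $\mathrm{Lie}(C_G(X))=C_{\mathfrak{g}}(X)$: equivariance alone only shows that $\exp_X(\mathbb{G}_a)$ acts trivially on $\mathrm{Lie}(C_G(X))$, and in a characteristic that is not pretty good this subspace can be strictly smaller than $C_{\mathfrak{g}}(X)$, so it is precisely condition (2) that forces the pretty good hypothesis, through the smoothness of centralizers.
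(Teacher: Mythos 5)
Your proposal is correct and follows essentially the same route as the paper: condition (1) is immediate from the construction of $\exp$ via $\varepsilon_{P_J}$ and Proposition \ref{Serre}, and condition (2) is exactly where the pretty good hypothesis enters, via Herpel's smoothness/reducedness of the centralizer of the height-one subgroup scheme determined by $X$, which gives $\mathrm{Lie}(C_G(X))=C_{\mathfrak{g}}(X)$, combined with the $G$-equivariance of $\exp$ to see that $\exp_X(\mathbb{G}_a)$ centralizes $C_G(X)$. The only differences are cosmetic: you spell out condition (1) (which the paper dismisses as clear) and phrase the equivariance step as $\exp_X(\mathbb{G}_a)\subseteq C_G(C_G(X))$ where the paper writes $C_G(\exp(sX))=C_G(X)$; both then conclude by Theorem \ref{description}.
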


\begin{proof}
It is clear from the construction of $\text{exp}$ and Proposition \ref{Serre} that $\text{exp}$ always satisfies condition (1) of Definition \ref{map}, so we are left to check condition (2).  Let $X \in \mathcal{N}_1(\mathfrak{g})$, and let $H$ be the closed subgroup scheme of $G$ isomorphic to $\mathbb{G}_{a(1)}$ which is determined by $X$ (see beginning of Section 2).  We have that $\mathfrak{h}=Lie(H)$ is spanned by $X$.  According to \cite[Theorem 3.3]{H}, since $p$ is a pretty good prime for $G$ and $H$ is a closed subgroup scheme, the scheme-theoretic centralizer of $H$, $C_G(H)$, is reduced.  It then follows from \cite[Lemma 3.1(ii)]{H} that $\mathfrak{h}$ is \textit{separable in} $\mathfrak{g}$, which means that $\text{dim} \, C_G(\mathfrak{h}) = \text{dim}_k \, C_{\mathfrak{g}}(\mathfrak{h})$, hence that $Lie(C_G(\mathfrak{h})) = C_{\mathfrak{g}}(\mathfrak{h})$.

Since $\text{exp}$ is $G$-equivariant, we have then that for any $0 \ne s \in k$, $C_G(\mathfrak{h})=C_G(sX)=C_G(\text{exp}(sX))$.  It follows that $\text{exp}_X(\mathbb{G}_a)$ acts trivially in the adjoint action on $C_{\mathfrak{g}}(X)$.  This proves that in pretty good characteristic $\text{exp}$ satisfies property (2) of Definition \ref{map}.
\end{proof}

\begin{remark}
See \cite[Remark 5.4]{H} for a nice description of reductive groups in pretty good characteristic.
\end{remark}

We conclude by showing that an exponential map for a connected reductive group is also an exponential map for its derived subgroup.

\begin{prop}
Let $G$ be a connected reductive group with derived subgroup $G^{\prime}$.  If $\mathcal{E}$ is an exponential map for $G$, then it is one for $G^{\prime}$. 
\end{prop}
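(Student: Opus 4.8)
The plan is to take the exponential map $\mathcal{E}$ for $G$ and simply restrict it to $\mathcal{N}_1(\mathfrak{g}')$, and then check that each clause of Definition \ref{map} is inherited by $G'$. Since $G'$ is a closed subgroup of $G$, the inclusion $\mathfrak{g}' = Lie(G') \hookrightarrow \mathfrak{g}$ is compatible with the $[p]$-operations, so $\mathcal{N}_1(\mathfrak{g}') = \mathcal{N}_1(\mathfrak{g}) \cap \mathfrak{g}'$ and $\mathcal{E}$ is already defined on $\mathcal{N}_1(\mathfrak{g}')$; restricting it clearly keeps it injective.

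First I would check that $\mathcal{E}$ actually carries $\mathcal{N}_1(\mathfrak{g}')$ into $G'$, which is the only clause requiring a separate observation. For $X \in \mathcal{N}_1(\mathfrak{g}')$, condition (1) for $G$ says that $\mathcal{E}_X \colon \mathbb{G}_a \to G$ is a homomorphism of algebraic groups, so $\mathcal{E}(X) = \mathcal{E}_X(1)$ is a unipotent element of $G$ (homomorphisms of linear algebraic groups preserve the Jordan decomposition, and $\mathbb{G}_a$ is unipotent); alternatively, $\mathcal{E}(X)^p = \mathcal{E}_X(p \cdot 1) = \mathcal{E}_X(0) = e$ since $p = 0$ in $k$, so $\mathcal{E}(X) \in \mathcal{U}_1(G)$. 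As recalled in Section 3, every unipotent element of a connected reductive group lies in its derived subgroup (equivalently, $G/G'$ is a torus and a torus has no nontrivial unipotent elements), so $\mathcal{E}(X) \in G'$. Thus $\mathcal{E}$ restricts to an injective map $\mathcal{N}_1(\mathfrak{g}') \to G'$.

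Next I would verify $G'$-equivariance together with conditions (1) and (2). Equivariance is automatic: for $g \in G'$ the adjoint action on $\mathfrak{g}'$ is the restriction of the adjoint action on $\mathfrak{g}$, conjugation by $g$ preserves $G'$, and $\mathcal{N}_1(\mathfrak{g}')$ is $G'$-stable, so the identity $\mathcal{E}(g \cdot X) = g \cdot \mathcal{E}(X)$ valid in $G$ already holds inside $G'$. For condition (1), the homomorphism $\mathcal{E}_X$ corestricts to a homomorphism $\mathbb{G}_a \to G'$ by the previous paragraph, and its differential sends $u_0$ to $X \in \mathfrak{g}' = Lie(G')$. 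For condition (2), I would note that $C_{\mathfrak{g}'}(X) = C_{\mathfrak{g}}(X) \cap \mathfrak{g}'$ is a linear subspace of $C_{\mathfrak{g}}(X)$; since by hypothesis $\mathcal{E}_X(\mathbb{G}_a)$ acts trivially in the adjoint action on $C_{\mathfrak{g}}(X)$, it acts trivially on the subspace $C_{\mathfrak{g}'}(X)$ as well.

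Because all three requirements transfer at once, there is no genuine obstacle here: the only non-formal input is the containment $\mathcal{E}(\mathcal{N}_1(\mathfrak{g}')) \subseteq G'$, which in turn rests only on the standard fact (already used in Section 3) that unipotent elements of a connected reductive group lie in its derived subgroup. Everything else is a direct consequence of the corresponding property for $G$, since the objects involved for $G'$ (the Lie algebra, the restricted nullcone, the centralizers) are sub-objects of those for $G$.
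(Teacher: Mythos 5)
Your proposal is correct and follows essentially the same route as the paper: restrict $\mathcal{E}$ to $\mathcal{N}_1(\mathfrak{g}')$, observe that $\mathcal{E}(X)$ is ($p$-)unipotent hence lies in $G'$, and note that equivariance and conditions (1) and (2) of Definition \ref{map} pass directly to the subgroup since $C_{\mathfrak{g}'}(X) = C_{\mathfrak{g}}(X)\cap\mathfrak{g}'$. The only cosmetic difference is that the paper phrases the containment $\mathcal{E}(X)\in G'$ via membership in a maximal unipotent subgroup, while you argue via $\mathcal{E}_X(1)^p = e$; both are fine.
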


\begin{proof}
As noted earlier in this section, we can identify $\mathcal{N}_1(\mathfrak{g}^{\prime})$ and $\mathcal{N}_1(\mathfrak{g})$.  For each $X \in \mathcal{N}_1(\mathfrak{g}^{\prime})$, since $\mathcal{E}(X)$ lies in a one-parameter subgroup of $G$, it also lies in a maximal unipotent subgroup of $G$, thus is in $G^{\prime}$.  Thus, $\mathcal{E}$ is a map from $\mathcal{N}_1(\mathfrak{g}^{\prime})$ to $G^{\prime}$, and if it is $G$-equivariant, then it is clearly $G^{\prime}$-equivariant.  Further, $C_{\mathfrak{g}^{\prime}}(X) = C_{\mathfrak{g}}(X) \cap \mathfrak{g}^{\prime}$, so if $\mathcal{E}_X(\mathbb{G}_a)$ acts trivially in the adjoint action on $C_{\mathfrak{g}}(X)$, it must also act trivially on $C_{\mathfrak{g}^{\prime}}(X)$.  Therefore $\mathcal{E}$ is an exponential map for $G^{\prime}$.
\end{proof}

\begin{remark}
For example, if $p$ is any prime, then $p$ is pretty good for $GL_n$ for all $n$, but $p$ is not pretty good for $SL_p$.  However, the truncated exponential series on nilpotent matrices is an exponential map not only for $GL_p$, but also for $SL_p$.
\end{remark}

\section{Saturation}

In this section we recall the ``saturation problem" for $p$-unipotent elements in $G$, which was introduced by Serre in \cite{Ser}, and then investigated successfully by Seitz in \cite{Sei}.  We show that the map $\text{exp}$ from the previous section provides a solution to this problem, and that this is the same answer as that already given in \cite{Sei}.

The problem is to associate to all $g \in \mathcal{U}_1(G)$ a canonical one-parameter subgroup $\phi_g$ of $G$ with the property that $\phi_g(1)=g$.  In \cite[\S 4]{Ser} it was shown that for $GL_n$ the saturation problem is solved by assigning to each $g \in \mathcal{U}_1(GL_n)$ the one-parameter subgroup $\phi_g$ defined by $\phi_g(s) = \text{exp}_p(s (\text{log}_p(g)))$, where

\vspace{0.05in}
\begin{center}$\text{log}_p(g) = (g-1) - \frac{(g-1)^2}{2} + \frac{(g-1)^3}{3} + \cdots + \frac{(-1)^p(g-1)^{p-1}}{p-1}$,\end{center}
\vspace{0.05in}

\noindent and for $X \in \mathcal{N}_1(\mathfrak{gl}_n)$

\vspace{0.05in}
\begin{center} $\text{exp}_p(X) = 1 + X + \frac{X^2}{2} + \cdots + \frac{X^{p-1}}{(p-1)!}$ \end{center}
\vspace{0.05in}

For $GL_n$, the map exp in Theorem \ref{exp} corresponds precisely to the truncated exponential series.  This is not surprising, and can be shown by Proposition \ref{oursgood} (below) together with the remarks following \cite[Theorem 1.3]{Sei}.  We immediately have the following generalization:

\begin{prop}\label{sat}
If $G$ is a connected reductive group and $p$ is good for $G$, then every $p$-unipotent element $g \in G$ lies in a canonical one-parameter subgroup $\phi_g$, where

\vspace{0.05in}
\begin{center}$\phi_g(s) = \textup{exp}(s(\textup{exp}^{-1}(g))), \text{ for all } s \in k$.\end{center}
\vspace{0.05in}

\end{prop}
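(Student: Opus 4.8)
The plan is to unpack the formula defining $\phi_g$ and verify three things: that it is well-defined, that it is a one-parameter subgroup of $G$, and that $\phi_g(1) = g$; canonicity will then be automatic. First, since $\textup{exp} : \mathcal{N}_1(\mathfrak{g}) \xrightarrow{\sim} \mathcal{U}_1(G)$ is a bijection by Theorem \ref{exp}, the element $X := \textup{exp}^{-1}(g)$ is a well-defined element of $\mathcal{N}_1(\mathfrak{g})$. Since $(sX)^{[p]} = s^p X^{[p]} = 0$ for every $s \in k$, we have $sX \in \mathcal{N}_1(\mathfrak{g})$, so $\phi_g(s) = \textup{exp}(sX)$ makes sense.

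Next I would show that $\phi_g$ is a homomorphism $\mathbb{G}_a \to G$. This is precisely condition (1) of Definition \ref{map} for $\textup{exp}$, which holds already in good characteristic and follows directly from the construction of $\textup{exp}$ in Theorem \ref{exp} together with Proposition \ref{Serre} (as was noted in the proof that $\textup{exp}$ is an exponential map). Concretely: by the results of \cite{NPV} and \cite{CLN} recalled at the start of Section 3, one can write $X = h \cdot Y$ with $h \in G$ and $Y \in \mathfrak{u}_J$ for some $J \subseteq \Pi$ with $U_J$ of nilpotence class less than $p$. Then $sX = h \cdot (sY)$ with $sY \in \mathfrak{u}_J$, so by the definition of $\textup{exp}$ in Theorem \ref{exp} we get $\phi_g(s) = h \cdot \varepsilon_{P_J}(sY)$. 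Since $[Y,Y] = 0$, the Baker--Campbell--Hausdorff product on $\mathfrak{u}_J$ satisfies $(sY) \ast (tY) = sY + tY = (s+t)Y$, so $s \mapsto sY$ is a one-parameter subgroup of the algebraic group $\mathfrak{u}_J$; composing with the group isomorphism $\varepsilon_{P_J} : \mathfrak{u}_J \xrightarrow{\sim} U_J$ of Proposition \ref{Serre} and then conjugating by $h$ yields the one-parameter subgroup $\phi_g$ of $G$.

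It then remains only to observe that $\phi_g(1) = \textup{exp}(1 \cdot X) = \textup{exp}(X) = g$, so the image of $\phi_g$ contains $g$. For canonicity, note that $\textup{exp}$ is \emph{the} $G$-equivariant bijection singled out by Theorem \ref{exp}, independent of the choices of $J$ and of the presentation $X = h \cdot Y$ by Proposition \ref{choice}; hence the assignment $g \mapsto \phi_g$ depends only on $G$, which is the sense in which it is canonical.

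I do not expect a serious obstacle here, since the substantive work was carried out in Theorem \ref{exp}. The one point requiring care is that $s \mapsto \textup{exp}(sX)$ be a genuine group homomorphism rather than merely a morphism of varieties; this rests on the fact that the $G$-equivariant extension $\textup{exp}$ still restricts to an isomorphism of \emph{algebraic groups} on each $\mathfrak{u}_J$, so that along the line $kX$ it agrees, up to conjugation, with such an isomorphism applied to the visibly one-parameter subgroup $\{\,sY : s \in k\,\}$ of $\mathfrak{u}_J$.
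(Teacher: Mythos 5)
Your proof is correct and follows the same route the paper intends: the paper offers no argument at all for this proposition (it is stated as an immediate consequence of Theorem \ref{exp}), and your write-up simply makes explicit why $s \mapsto \textup{exp}(sX)$ is a genuine homomorphism, namely that $\textup{exp}$ restricts to the group isomorphism $\varepsilon_{P_J}$ on $\mathfrak{u}_J$ and the line $kY$ is a one-parameter subgroup for the Baker--Campbell--Hausdorff structure. The verification of well-definedness, of $\phi_g(1)=g$, and of canonicity via the uniqueness in Theorem \ref{exp} are all as the paper implicitly relies on them.
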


\bigskip
In order to compare this solution to that given in \cite{Sei}, we must first recall the definition of a \textit{good $A_1$ subgroup}.  A closed subgroup $A \le G$ is of type $A_1$ if $A$ is isomorphic to $SL_2$ or $PSL_2$.  Let $T_A$ be a maximal torus of $A$.  We say that $A$ is good if $\mathfrak{g}$, as a $T_A$-module, has weights which are $\le 2p-2$.  Good $A_1$ subgroups were used by Seitz to specify the canonical one-parameter subgroups which contain $p$-unipotent elements.  Specifically, he proved the following:

\begin{thm}\label{mono}\cite{Sei}
There is a unique monomorphism $\psi_g: \mathbb{G}_a \rightarrow G$ with image contained in a good $A_1$ and satisfying $\psi_g(1) = g$.
\end{thm}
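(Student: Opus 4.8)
The statement is due to Seitz \cite{Sei}, and my plan is to recover it from the machinery above by identifying $\psi_g$ with the one-parameter subgroup $\phi_g$ produced in Proposition \ref{sat}. Set $X = \textup{exp}^{-1}(g) \in \mathcal{N}_1(\mathfrak{g})$, so that $\phi_g(s) = \textup{exp}(sX)$; since $g \neq 1$ forces $X \neq 0$ and $\textup{exp}$ is injective, $\phi_g$ is already a monomorphism. It then remains to show that $\textup{im}(\phi_g)$ lies in a good $A_1$ and that no other monomorphism with the stated properties exists.

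For existence, I would invoke Seitz's construction attaching to the nonzero $p$-nilpotent element $X$ a good $A_1$ subgroup $A \le G$ with $X \in \mathfrak{a}$, chosen so that $kX = \mathfrak{u}$ is the Lie algebra of a root subgroup $U$ of $A$ (relative to a maximal torus $T_A \le A$ normalizing $U$). Every prime is good for a group of type $A_1$, and the unipotent radical of a Borel of $A$ is one-dimensional, hence of nilpotence class $1 < p$; so Theorem \ref{exp} applies to $A$ and gives a canonical $A$-equivariant bijection $\textup{exp}_A : \mathcal{N}_1(\mathfrak{a}) \xrightarrow{\sim} \mathcal{U}_1(A)$ which on $\mathfrak{u}$ is the root homomorphism parametrizing $U$. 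The crux is that $\textup{exp}$ agrees with $\textup{exp}_A$ on $\mathcal{N}_1(\mathfrak{a})$, i.e. that $\textup{exp}$ carries $\mathcal{N}_1(\mathfrak{a})$ into $A$ and restricts there to $\textup{exp}_A$. Granting this, $\phi_g(s) = \textup{exp}_A(sX) \in U \le A$, so $\textup{im}(\phi_g)$ lies in a good $A_1$; a monomorphism since, as above, $X \neq 0$.

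For uniqueness, suppose $\psi : \mathbb{G}_a \to G$ is any monomorphism with $\psi(1) = g$ and image in a good $A_1$, say $A'$. A one-dimensional connected unipotent subgroup of $A' \cong SL_2$ or $PSL_2$ is necessarily a root subgroup, and a closed-immersion parametrization of it sending $1$ to $g$ is unique (automorphisms of $\mathbb{G}_a$ are scalings); so $\psi$ is determined by the root subgroup of $A'$ through $g$. By Seitz's uniqueness, the good $A_1$ attached to $X$ is unique up to conjugacy by $C_G(X)^{\circ}$, and since $\textup{exp}$ is $G$-equivariant and injective we have $C_G(g) = C_G(X)$. Once $\textup{exp}_A = \textup{exp}\mid_{\mathcal{N}_1(\mathfrak{a})}$ is known, conjugating $A$ by $c \in C_G(X)^{\circ}$ replaces $\textup{exp}_A(sX)$ by $\textup{exp}_{cAc^{-1}}(s(c \cdot X)) = \textup{exp}(sX)$, so the one-parameter subgroup through $g$ carried by a good $A_1$ is independent of all choices and equals $\phi_g$. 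Hence $\psi = \phi_g = \psi_g$.

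The main obstacle is exactly the compatibility used in both steps: that $\textup{exp}$, which is built from the Serre isomorphism $\varepsilon_{P_J}$ of the single parabolic with $G \cdot \mathfrak{u}_J = \mathcal{N}_1(\mathfrak{g})$, nonetheless lands inside every good $A_1$ on its Lie algebra and is computed there by root homomorphisms. A generator $X$ of $\mathfrak{u}$ need not be a root vector of $G$ with respect to $T$ (it can be regular nilpotent), so this cannot be read off directly from Proposition \ref{Serre}; it requires Seitz's analysis of how good $A_1$ subgroups sit relative to the root structure and the exponential-type filtration of $G$, and is essentially the content of the auxiliary Proposition \ref{oursgood}. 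Once that is in place, both the existence claim above and the identification of $\phi_g$ in Proposition \ref{sat} with Seitz's $\psi_g$ follow at once.
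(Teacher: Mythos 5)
This statement is not proved in the paper at all: Theorem \ref{mono} is quoted verbatim from Seitz \cite{Sei} and serves as an \emph{input} to the paper (it is what makes the comparison in Proposition \ref{oursgood} possible), not as something the paper's machinery establishes. Your attempt therefore cannot be "the same approach as the paper"; more importantly, it is circular with respect to the paper's logical order. You yourself identify the crux of both your existence and uniqueness arguments as the compatibility $\textup{exp}\mid_{\mathcal{N}_1(\mathfrak{a})} = \textup{exp}_A$ for a good $A_1$ subgroup $A$, and you say this "is essentially the content of the auxiliary Proposition \ref{oursgood}." But the paper's proof of Proposition \ref{oursgood} begins by invoking Theorem \ref{mono} --- it takes the homomorphism $\psi: SL_2 \to G$ with good $A_1$ image and $\psi_g(1)=g$ as given, and then runs the $T_A$-weight argument to conclude $\psi_g = \textup{exp}_{X_0}$. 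So you are deriving Theorem \ref{mono} from a proposition whose proof presupposes Theorem \ref{mono}. Stripped of that circular step, what remains of your argument is: (i) existence of a good $A_1$ containing $g$ (resp.\ with $X$ in its Lie algebra), and (ii) conjugacy of any two such under $C_G(g)^{\circ}$ --- both of which you import wholesale from Seitz, and which together essentially \emph{are} the theorem. Nothing is actually proved.

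Two smaller points. First, your reduction of uniqueness to "automorphisms of $\mathbb{G}_a$ are scalings" is too quick: over an algebraically closed field of characteristic $p$, the additive map $s \mapsto s^p$ is a bijective endomorphism of $\mathbb{G}_a(k)$ with the same image, and $\psi_g \circ F$ still sends $1$ to $g$; ruling this out requires interpreting "monomorphism" scheme-theoretically (trivial infinitesimal kernel), which is part of how Seitz's statement must be read and is not addressed by your parenthetical. Second, even granting Seitz's existence of a good $A_1$ through $g$, your passage from $g$ to "a good $A_1$ with $X = \textup{exp}^{-1}(g)$ in its Lie algebra as a root vector" already uses the unproven compatibility between the global $\textup{exp}$ of Theorem \ref{exp} and the intrinsic exponential of $A$. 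The honest treatment here is simply to cite Seitz for Theorem \ref{mono}, as the paper does, and reserve the weight-space argument for Proposition \ref{oursgood}.
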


We are now in position to prove:

\begin{prop}\label{oursgood}
The one-parameter subgroups $\phi_g$ and $\psi_g$ agree for every $g \in \mathcal{U}_1(G)$.
\end{prop}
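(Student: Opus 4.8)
The plan is to check that $\phi_g$ has the three defining properties of $\psi_g$ in Theorem~\ref{mono}: that it is a monomorphism $\mathbb{G}_a\to G$, that its image lies in a good $A_1$ subgroup, and that $\phi_g(1)=g$. The last is automatic since $\phi_g(1)=\textup{exp}(\textup{exp}^{-1}(g))=g$, and we may assume $g\neq 1$, the case $g=1$ being trivial. Fix a good $A_1$ subgroup $A\leq G$ containing the image of $\psi_g$. As $A$ has type $A_1$, the subgroup $\psi_g(\mathbb{G}_a)$ must be the unique maximal unipotent --- equivalently, root --- subgroup $U_\beta$ of $A$ containing $g$; write $e_\beta\in\mathfrak{a}\subseteq\mathfrak{g}$ for the root vector, so that $Lie(U_\beta)=ke_\beta$, and let $\varphi_\beta$ be the corresponding root homomorphism of $A$, an isomorphism $\mathbb{G}_a\xrightarrow{\sim}U_\beta$ with $d\varphi_\beta(u_0)=e_\beta$ and $\psi_g(s)=\varphi_\beta(c_0 s)$ where $g=\varphi_\beta(c_0)$, $c_0\neq 0$. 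Everything then reduces to the single claim that
\[
\textup{exp}(se_\beta)=\varphi_\beta(s)\qquad\text{for all }s\in k;
\]
granting it, $\textup{exp}^{-1}(g)=c_0e_\beta$, whence $\phi_g(s)=\textup{exp}(sc_0e_\beta)=\varphi_\beta(c_0 s)=\psi_g(s)$, which is stronger than what is needed.

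To prove the claim, I would first produce a parabolic subgroup $P\leq G$ whose unipotent radical $U$ has nilpotence class less than $p$ and contains $U_\beta$. Granting this, Theorem~\ref{exp} identifies $\textup{exp}|_{\mathfrak{u}}$ with $\varepsilon_P$; since $ke_\beta$ is an abelian Lie subalgebra of $\mathfrak{u}$, the Baker--Campbell--Hausdorff product restricts to ordinary addition on it, so $s\mapsto\varepsilon_P(se_\beta)$ is a homomorphism $\mathbb{G}_a\to U$ with nonzero differential, hence a separable embedding onto a connected one-dimensional subgroup of $U$ with Lie algebra $ke_\beta$. In a nilpotent group of class less than $p$ the connected closed subgroups correspond bijectively to Lie subalgebras via the tangent space, so that subgroup is $U_\beta$ itself and $\varepsilon_P$ restricts to an isomorphism $ke_\beta\xrightarrow{\sim}U_\beta$. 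As $\varphi_\beta$ is another isomorphism $\mathbb{G}_a\xrightarrow{\sim}U_\beta$ with the same value of $d(-)(u_0)$, and the only automorphism of $\mathbb{G}_a$ with derivative $1$ is the identity, the two parametrizations agree, giving $\textup{exp}(se_\beta)=\varepsilon_P(se_\beta)=\varphi_\beta(s)$.

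The real obstacle is producing that parabolic. The obvious candidate, the parabolic attached to the cocharacter $\beta^\vee$, only has unipotent radical of nilpotence class at most $2p-2$ by the defining weight bound on a good $A_1$, so a sharper input is required: either a smaller parabolic still containing $U_\beta$, or the explicit structure of good $A_1$ subgroups together with the Carlson--Lin--Nakano parabolic $P_J$ --- for which $G\cdot\mathfrak{u}_J=\mathcal{N}_1(\mathfrak{g})$ and $G\cdot U_J=\mathcal{U}_1(G)$ with $\mathfrak{u}_J$ of class less than $p$ --- suitably conjugated into position. This compatibility between good $A_1$ subgroups and the maps $\varepsilon_P$ is in essence the content of \cite[\S 5]{Sei}, which is also why the resulting canonical one-parameter subgroup must coincide with Seitz's; everything beyond this point is routine bookkeeping with the Lazard correspondence inside a group of nilpotence class less than $p$.
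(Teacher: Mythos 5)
Your reduction to the single identity $\textup{exp}(se_\beta)=\varphi_\beta(s)$ is sound, but the argument you sketch for that identity has two genuine gaps, and neither is bookkeeping. First, you never produce the parabolic $P$ with $U_\beta\subseteq R_u(P)$ and $R_u(P)$ of nilpotence class less than $p$; you rightly note that the parabolic attached to $\beta^{\vee}$ only bounds the class by $2p-2$, and then defer the construction to \cite[\S 5]{Sei}. That is the crux, not a citation-level detail: nothing established in the paper guarantees that the whole subgroup $U_\beta$ (as opposed to the single element $g$) conjugates into $U_J$, and $\beta$ is a root of $A$ with respect to $T_A$, not a root of $G$ with respect to a maximal torus, so the last clause of Proposition~\ref{Serre} is not available either. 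Second, even granting the parabolic, the step ``in a nilpotent group of class less than $p$ the connected closed subgroups correspond bijectively to Lie subalgebras'' is false in characteristic $p$: in $U=\mathbb{G}_a\times\mathbb{G}_a$ the subgroups $\{(t,0)\}$ and $\{(t,t^p)\}$ are distinct connected closed one-dimensional subgroups with the same Lie algebra $k\cdot(1,0)$. Knowing that $s\mapsto\varepsilon_P(se_\beta)$ parametrizes \emph{some} one-dimensional subgroup with Lie algebra $ke_\beta$ therefore does not identify that subgroup with $U_\beta$, and the final comparison of parametrizations never gets off the ground.

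The missing ingredient is precisely the defining weight bound of a good $A_1$, and the paper's proof uses it directly, bypassing parabolics and the Lazard correspondence entirely. Set $X_0=d\psi_g(u_0)$. Then $d\psi_g(u_i)$ is a $T_A$-weight vector of weight $2p^i$ in $Dist(G)$, and by $G$-equivariance of $\textup{exp}$ (conjugating $\textup{exp}_{X_0}$ by $\psi_T(c)$) so is $d\,\textup{exp}_{X_0}(u_i)$. Lemma~\ref{equal} places the difference $d\psi_g(u_i)-d\,\textup{exp}_{X_0}(u_i)$ in $\mathfrak{g}$, where every nonzero $T_A$-weight is at most $2p-2<2p^i$; hence the difference vanishes, and by induction $\psi_g=\textup{exp}_{X_0}$, whence $\phi_g=\psi_g$ since $\textup{exp}_{X_0}(1)=g$. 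If you wish to keep your structure, you would have to import from \cite{Sei} both the conjugacy statement for $U_\beta$ and a rigidity statement pinning down $U_\beta$ inside $U$, at which point you are reproving a substantial part of that paper rather than quoting Theorem~\ref{mono}.
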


\begin{proof}
First, by the definition of $\psi_g$ given in \cite{Sei} we may assume that it comes from a homomorphism $\psi: SL_2 \rightarrow G$ such that $$\psi_g(a) = \psi \left(\begin{pmatrix} 1 & a \\ 0 & 1\end{pmatrix} \right).$$  Let $T_A$ be the image of the diagonal subgroup of $SL_2$, and for each $c \in k^{\times}$ we will use the notation $$\psi_T(c) = \psi \left(\begin{pmatrix} c & 0 \\ 0 & c^{-1}\end{pmatrix} \right).$$  Let $X_0$ be the image of $u_0 \in Dist(\mathbb{G}_a)$ under $d\psi_g$, the differential of the one-parameter subgroup $\psi_g$.  We see that in the adjoint action of $G$, $X_0$ is a weight vector for $T_A \le G$ with weight $2$.  Moreover, $d\psi_g(u_i)$ is a weight vector of weight $2p^i$ for $T_A \le G$ acting on $Dist(G)$.

Since $\text{exp}$ is $G$-equivariant, we have for each $c \in k^{\times}$ and $a \in k$ that $$\psi_T(c) \text{exp}_{X_0}(a) \psi_T(c^{-1}) = \text{exp}_{X_0}(c^2a).$$  It follows that each element $d\text{exp}_{X_0}(u_i) \in Dist(G)$ is also a weight vector for $T_A$ of weight $2p^i$.  By Lemma \ref{equal}, $d\psi_g(u_1) - d\text{exp}_{X_0}(u_1)$ is an element of $\mathfrak{g}$, and by preceeding remarks is a weight vector of $T_A$ of weight $2p$.  But all non-zero weight vectors of $T_A$ on $\mathfrak{g}$ are $\le 2p-2$, thus $d\psi_g(u_1) = d\text{exp}_{X_0}(u_1)$.  Continuing in this way we have that $d\psi_g(u_i) = d\text{exp}_{X_0}(u_i)$ for all $i$, from which is follows that $\psi_g = \text{exp}_{X_0}$.  As $\text{exp}_{X_0}(1) = g$, we have that $\phi_g = \text{exp}_{X_0}$, completing the proof.

\end{proof}


\bigskip
\noindent \textbf{Acknowledgements:} We wish to thank Eric Friedlander for useful remarks on an earlier version of this manuscript.  We also wish to thank the referee for many helpful observations.  This research was partially supported by grants from the Australian Research Council (DP1095831, DP0986774 and DP120101942).

\vspace{.2 in}
\noindent\tiny{DEPARTMENT OF MATHEMATICS \& STATISTICS, UNIVERSITY OF MELBOURNE, PARKVILLE, VIC 3010, AUSTRALIA}\\
paul.sobaje@unimelb.edu.au\\
Phone: \text{+}61 \, 401769982

\end{document}